\setlist[enumerate,1]{label=(\arabic*), ref=(\arabic*)}
\setlist[enumerate,3]{label=(\roman*), ref=(\roman*)}
\newcommand{\labeltext}[2]{%
    \@bsphack%
    \csname phantomsection\endcsname
    \def\tst{#1}%
    \def\refmarkup{}%
    \def\@currentlabel{\refmarkup{#1}}{\label{#2}}%
    \@esphack%
}
\newcounter{propcounter}
\newenvironment{proplist}{%
    \stepcounter{propcounter}%
    \begin{enumerate}[label = {\bfseries \Alph{propcounter}\arabic{enumi}}]%
}
{\end{enumerate}}
\crefname{subsection}{subsection}{subsections}
\newtheorem{theorem}{Theorem}[section]
\newtheorem{lemma}[theorem]{Lemma}
\newtheorem{proposition}[theorem]{Proposition}
\theoremstyle{definition}
\newtheorem{definition}[theorem]{Definition}
\newtheorem{question}[theorem]{Question}
\theoremstyle{plain}
\newtheorem{claim}{Claim}[theorem]
\newtheorem*{claim*}{Claim}
\newenvironment{claimproof}[1][Proof]{\par
	\pushQED{\qed}%
	
	\normalfont \topsep6\p@\@plus6\p@\relax
	\trivlist
	\item[\hskip\labelsep
	\textit{#1}\@addpunct{.}~]\ignorespaces
}{%
	\popQED\endtrivlist\@endpefalse
}
\newlist{Cases}{enumerate}{3}
\setlist[Cases]{parsep=0pt plus 1pt}
\setlist[Cases,1]{wide=0pt, listparindent=\parindent,
    label = \textbf{Case~\arabic*:}, ref = \arabic*}
\setlist[Cases,2]{wide, labelindent=\parindent,
    label = \textbf{Case~\arabic{Casesi}}-(\arabic{Casesii}):}
\setlist[Cases,3]{wide, labelindent=1.5\parindent, topsep=5pt,
    label = \textbf{Case~\arabic{Casesi}}-(\arabic{Casesii})-(\arabic{Casesiii}):}
\crefname{Casesi}{case}{cases}
\crefname{conjecture}{conjecture}{conjectures}
\newcounter{statement}
\let\c@statement\c@equation
\NewDocumentCommand{\statementapp}{m m m m}{%
	\def\rowBname{#1}%
	\def\rowAname{#2}%
	\def\listB{#3}%
	\def\listA{#4}%
	\newcount\listlen
	\let\arraycontent\empty
	\xappto\arraycontent{\text{\rowAname}}
	\foreach \Ae [count=\n] in \listA {%
		  \xappto\arraycontent{& \Ae}%
		  \global\let\listlen\n
	}%
        \gappto\arraycontent{\\ \hline}
        \xappto\arraycontent{\text{\rowBname}}
	\foreach \Be in \listB {%
		  \xappto\arraycontent{& \Be}%
	}%
	\[%
	\begin{array}{|r<{\hspace{0.3pc}}||*{\listlen}{c|}}
		\hline
		\arraycontent \\
		\hline
	\end{array}\]%
}
\newcommand{\TT}{\mathbf{T}}
\newcommand{\calA}{\mathcal{A}}
\newcommand{\calD}{\mathcal{D}}
\newcommand{\defeq}{\coloneqq}
\let\originalleft\left
\let\originalright\right
\renewcommand{\left}{\mathopen{}\mathclose\bgroup\originalleft}
\renewcommand{\right}{\aftergroup\egroup\originalright}
\newcommand{\Hpart}{\mathbf{H}}
\DeclareRobustCommand{\vvrev}[1]{%
  \mathpalette\do@vv@rev{#1}%
}
\newcommand{\do@vv@rev}[2]{%
  \fix@vv@rev{#1}{+}%
  \reflectbox{$\m@th#1\vv{\reflectbox{$\fix@vv@rev{#1}{-}\m@th#1#2\fix@vv@rev{#1}{+}$}}$}%
  \fix@vv@rev{#1}{-}%
}
\newcommand{\fix@vv@rev}[2]{%
  \ifx#1\displaystyle
    \mkern#23mu
  \else
    \ifx#1\textstyle
      \mkern#23mu
    \else
      \ifx#1\scriptstyle
        \mkern#22mu
      \else
        \mkern#22mu
      \fi
    \fi
  \fi
}
\DeclareRobustCommand{\arc}{\vv}
\DeclareRobustCommand{\arcrev}{\vvrev}
\newcommand{\rarc}{\rightarrow}
\newcommand{\outdir}{\Rightarrow}
\newcommand{\indir}{\Leftarrow}
\newcommand{\crar}[1]{\xrightarrow{#1}}
\tikzset{
    vertex/.style={circle, fill=black, inner sep=0pt, minimum size=2mm},
    every label/.append style={rectangle, outer sep=3pt},
}
\tikzset{
    ->-/.style={
        decoration={markings,mark=at position #1 with {\arrow{>}}},
        postaction={decorate}},
    ->-/.default={0.75},
    -<-/.style={
        decoration={markings,mark=at position #1 with {\arrow{<}}},
        postaction={decorate}},
    -<-/.default={0.25},
}
\tikzset{
    highlight edge/.style = {preaction={draw,lightgray, double=lightgray,double distance=5pt}}
}
\title{Hamilton transversals in tournaments}
\author{
Debsoumya Chakraborti
    \thanks{Mathematics Institute, University of Warwick, Coventry, UK.
    E-mail: \texttt{debsoumya.chakraborti@warwick.ac.uk}. 
    Supported by the Institute for Basic Science (IBS-R029-C1), and the European Research Council (ERC) under the European Union Horizon 2020 research and innovation programme (grant agreement No.~947978).}
\and Jaehoon Kim
    \thanks{Department of Mathematical Sciences, KAIST, South Korea. Email: \texttt{jaehoon.kim@kaist.ac.kr, hyunwoo.lee@kaist.ac.kr}.
    Supported by the National Research Foundation of Korea (NRF) grant funded by the Korea government(MSIT) No.~RS-2023-00210430.}
\and Hyunwoo Lee\footnotemark[2]
    \thanks{Extremal Combinatorics and Probability Group
    (ECOPRO), Institute for Basic Science (IBS), South Korea.
    Partially supported by the Institute for Basic Science (IBS-R029-C4).}
\and Jaehyeon Seo
    \thanks{Department of Mathematics, Yonsei University, South Korea. E-mail: \texttt{jaehyeonseo@yonsei.ac.kr}.
    Supported by the National Research Foundation of Korea (NRF) grant funded by the Korea government(MSIT) No.~2022R1C1C1010300.}
}
\date{\today}
\begin{document}
\maketitle

\begin{abstract}
    It is well-known that every tournament contains a Hamilton path, and every strongly connected tournament contains a Hamilton cycle. 
    This paper establishes \textit{transversal} generalizations of these classical results. For a collection \(\TT=(T_1,\dots,T_m)\) of not-necessarily distinct tournaments on a common vertex set $V$, an $m$-edge directed graph $\mathcal{D}$ with vertices in $V$ is called a $\TT$-transversal if there exists a bijection \(\phi\colon E(\mathcal{D})\to [m]\) such that \(e\in E(T_{\phi(e)})\) for all \(e\in E(\mathcal{D})\). We prove that for sufficiently large $m$ with $m=|V|-1$, there exists a $\TT$-transversal Hamilton path. Moreover, if $m=|V|$ and at least $m-1$ of the tournaments $T_1,\ldots,T_m$ are assumed to be strongly connected, then there is a $\TT$-transversal Hamilton cycle. 
    In our proof, we utilize a novel way of partitioning tournaments which we dub \emph{\(\Hpart\)-partition}. 
\end{abstract}

\section{Introduction}\label{sec:intro}

Given a collection $\mathcal{F}=\{F_1,\ldots,F_m\}$ of sets, a set $X$ of size $m$ is called an \textit{$\mathcal{F}$-transversal} if there is a labelling $x_1,\ldots,x_m$ of the elements in $X$ such that $x_i\in F_i$ for each $i\in [m]$.
Transversals over various mathematical objects have been studied in the literature throughout the last few decades.
To name a few, such variants are extensively studied for Carath\'eodory's theorem~\cite{barany1982generalization,kalai2009colorful}, Helly's theorem~\cite{kalai2005topological}, the Erd\H{o}s-Ko-Rado theorem~\cite{aharoni2017rainbow}, Rota's basis conjecture~\cite{huang1994relations,pokrovskiy2020rota}, etc.

The same notion for graphs, i.e., transversals over a collection of graphs, is implicitly used in much of the literature and explicitly defined in~\cite{joos2020rainbow}. The same definition can be extended for related objects such as hypergraphs and directed graphs as follows. 

\begin{definition}
For a given collection $\mathcal{F}=(\calD_1,\dots, \calD_m)$ of graphs/hypergraphs/directed graphs with the same vertex set $V$, an $m$-edge graph/hypergraph/directed graph $\calD$ on the vertex set $V$ is an $\mathcal{F}$-transversal if there exists a bijection $\phi:E(\calD)\rightarrow [m]$ such that $e\in E(\calD_{\phi(e)})$ for all $e\in E(\calD)$.
\end{definition}

By interpreting each $\calD_i$ as the set of edges colored with the color $i$, the function $\phi$ is often called a \emph{coloring}.
We say the coloring \(\phi\) is \emph{rainbow} if it is injective, and we say \(\calD\) is a \textit{rainbow} if there is a rainbow coloring on it. 
Many classical results in extremal graph theory have been extended to such transversal settings, exhibiting interesting phenomena.

The classical Mantel's theorem states that any $n$-vertex graph with more than $\frac{1}{4}n^2$ edges must contain a triangle. Aharoni, DeVos, de la Maza, Montejano, and \v{S}\'{a}mal~\cite{aharoni2020rainbow} considered a transversal version of this showing that if a graph collection $\mathcal{G}=(G_1,G_2,G_3)$ on $n$ vertices satisfies ${\min_{i\in [3]} \{e(G_i)\} > \big(\frac{26-2\sqrt{7}}{81}\big)n^2}$, then it has a $\mathcal{G}$-transversal isomorphic to a triangle. Surprisingly, this condition with the irrational multiplicative constant is best possible. Furthermore, 
this captures an interesting phenomenon that $\frac{26-2\sqrt{7}}{81}$ is larger than $\frac{1}{4}$, which we obtain from Mantel's theorem. It is an interesting open problem to obtain a similar tight condition for the existence of a transversal of $K_r$ with $r>3$.  
A similar extremal problem, where instead of putting a condition on $\min_i \{e(G_i)\}$, to find the tight condition on $\sum_i e(G_i)$ for the existence of a $\mathcal{G}$-transversal isomorphic to a given graph is studied in \cite{chakraborti2024rainbow,keevash2004multicolour}.

Addressing a question of~\cite{aharoni2020rainbow}, Cheng, Wang, and Zhao~\cite{cheng2021pancyclicity} obtained an asymptotic version of the transversal generalization of the classical Dirac's theorem, and a complete resolution was independently obtained by Joos and the second author~\cite{joos2020rainbow}. They proved that if a graph collection $\mathcal{G}=(G_1,\ldots,G_n)$ on $n$ vertices satisfies ${\min_{i\in [n]} \{\delta(G_i)\} \ge n/2}$, then it has a $\mathcal{G}$-transversal isomorphic to a Hamilton cycle. Soon a number of results followed, finding similar tight conditions to find $\mathcal{G}$-transversals isomorphic to a few other graphs; see~\cite{gupta2023general,montgomery2022transversal}. Finally, the first two authors, Im, and Liu~\cite{chakraborti2023bandwidth} generalized these results by establishing the bandwidth theorem for graph transversals. 

Transversal generalizations were recently considered for hypergraphs and directed graphs; see,~\cite{cheng2023rainbow,cheng2021transversal}.
The above lines of research have a central theme which can be pinned by the following meta-question. 
\begin{question}\label{question}
    For a given graph/hypergraph/directed graph $\calD$ with $m$ edges, which properties $\mathcal{P}_n$ will ensure the following? Every collection $\mathcal{F}$ of $m$~graphs/hypergraphs/directed graphs on the same vertex set of size $n$ satisfying the property $\mathcal{P}_n$ contains a $\mathcal{F}$-transversal copy of $\calD$.
\end{question}

As all objects in the collection could be identical, a natural necessary criterion for such a property $\mathcal{P}_n$ is that it has to ensure that every $n$-vertex graph/hypergraph/directed graph with property $\mathcal{P}_n$ contains a copy of $\calD$. However, it is not necessarily sufficient as mentioned by the result of Aharoni, DeVos, de la Maza, Montejano, and \v{S}\'{a}mal~\cite{aharoni2020rainbow}. Thus, it is a natural problem to investigate when these properties directly carry over to the transversal generalizations from the original results. We answer this in positive for the transversal generalizations of the following two folklore results for sufficiently large tournaments. We call a tournament $T$ \textit{strongly connected} if for every pair of vertices $x,y$ in $T$, there is a directed path from $x$ to~$y$. Such a path or a cycle is \emph{Hamilton} if it contains all vertices of the given digraph.
In this paper, whenever we mention paths and cycles, we always refer to directed paths and directed cycles.
\begin{enumerate}[leftmargin=*]
    \item Every tournament contains a Hamilton path. \label{every tournament has a directed Hamilton path}
    \item Every strongly connected tournament contains a Hamilton cycle. \label{every strongly connected tournament contains a directed Hamilton cycle}
\end{enumerate}

In what follows, we always assume that \(\TT=(T_1,\dots,T_m)\) is a collection of tournaments on the common vertex set \(V(\TT)\). 
Our first main result is to establish a transversal version of~\ref{every tournament has a directed Hamilton path}. 

\begin{theorem}\label{thm:rainbow-dir-ham-path}
    For every sufficiently large \(n\), every collection $\TT$ of $n-1$ tournaments with $|V(\TT)| = n$ contains a transversal  Hamilton path.
\end{theorem}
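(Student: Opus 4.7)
My plan is to prove the theorem by a transversal absorption argument, using the \(\Hpart\)-partition announced in the abstract as the main structural engine. The setup is tight: there are exactly \(n-1\) colors available for the \(n-1\) edges of the target path, so no color can be wasted, and both the absorbing step and the extension step must work \emph{exactly} rather than approximately, in contrast to the usual undirected transversal absorption arguments.

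The first step is to apply the \(\Hpart\)-partition to the collection \(\TT\) in order to produce, for every color \(c\in[n-1]\) and every vertex \(v\in V(\TT)\), a large family of short ``swap gadgets'' inside \(T_c\): small directed substructures into which \(v\) can be inserted while exchanging only a bounded number of other colors. From these gadgets I would build, via random sampling combined with a Hall/matching-type argument, a short transversal \emph{absorbing path} \(P_{\mathrm{abs}}\) covering a small vertex set \(V_{\mathrm{abs}}\) and using a reserved color set \(C_{\mathrm{abs}}\), with the following flexibility property: for every small leftover set \(S\subseteq V(\TT)\setminus V_{\mathrm{abs}}\) and every color set \(C\subseteq[n-1]\setminus C_{\mathrm{abs}}\) with \(|C|=|S|\), there is a transversal path on \(V_{\mathrm{abs}}\cup S\) with the same endpoints as \(P_{\mathrm{abs}}\) using precisely the colors \(C_{\mathrm{abs}}\cup C\).

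With \(P_{\mathrm{abs}}\) set aside, the next step is a greedy extension phase modelled on the classical insertion proof that every tournament has a Hamilton path. Starting from an endpoint of \(P_{\mathrm{abs}}\), I would grow the transversal path one vertex and one color at a time using prepend/append/insert moves; whenever the transversal constraint blocks a direct extension, the \(\Hpart\)-partition should supply a localized color exchange along the current path that restores an extendable endpoint. This continues until only a small leftover set of vertices, together with a matching leftover set of colors, remains; the absorption property of \(P_{\mathrm{abs}}\) then incorporates the leftover vertices using exactly the leftover colors and completes the Hamilton transversal path.

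The main obstacle I anticipate is the construction of \(P_{\mathrm{abs}}\). Because there is no color slack, the absorber must realize \emph{every} bijection between its possible leftover vertices and leftover colors, not merely most such bijections. Producing enough absorbers of each required type, and arranging them into a single path so that a probabilistic selection (with a deletion step to remove bad configurations) yields a \(P_{\mathrm{abs}}\) with full bijective flexibility, is exactly what I expect the \(\Hpart\)-partition to be engineered to enable, and it is where I anticipate the bulk of the technical work to lie.
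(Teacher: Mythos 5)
Your high-level instincts are right (absorption is needed, the \(\Hpart\)-partition is the structural engine, and the zero color slack is the difficulty), but the proposal has a genuine gap at its center: the absorbing path \(P_{\mathrm{abs}}\) with ``full bijective flexibility'' over both leftover vertices and leftover colors is never constructed, and you yourself flag it as the place where the bulk of the work lies. That object is strictly stronger than anything the paper builds, and constructing it with no spare colors is essentially the whole problem restated. The paper avoids it entirely by never leaving leftover \emph{vertices}: the \(\Hpart(\mu n,\gamma)\)-partition \((W_1,\dots,W_r,w_1,\dots,w_{r-1})\) covers all of \(V(\TT)\) deterministically, a Hamilton path is chosen inside each block \(W_i\) (using the folklore non-transversal result, or \Cref{lem:rainbow-dir-ham-path_n-cols} when colors must be consumed), and the blocks are concatenated through the intermediate vertices \(w_i\), which accept \emph{any} endpoints of the within-block paths because \(W_i\outdir\{w_i\}\outdir W_{i+1}\). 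The only absorption is of \emph{colors}: \Cref{lem:absorber} (imported from Montgomery--M\"uyesser--Pehova) fixes the arc set of a designated sub-linear-forest \(Q_1\) and guarantees that \(Q_1\) can be rainbow-colored by \(A\cup C'\) for any \(\gamma m\)-subset \(C'\) of a reservoir \(C\); the small set of stray unused colors is then forced onto one reserved block via \Cref{lem:embed-linear-fixed-color}. Note the asymmetry: the vertex structure is rigid and the color assignment is flexible, which is much weaker (and achievable) compared to your requirement that the absorber realize every vertex-set/color-set pairing.

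A second, smaller gap is the ``greedy extension phase modelled on the classical insertion proof.'' With exactly \(n-1\) colors, the insertion argument does not carry over: the paper's \Cref{lem:rainbow-dir-ham-path_n-cols} shows the longest-path argument works when there is \emph{one} spare color (two unused colors force an extension), and squeezing out that last color is precisely why the whole partition-plus-absorption machinery is needed. Your proposed fix --- that the \(\Hpart\)-partition supplies ``a localized color exchange along the current path'' whenever extension is blocked --- is not something the partition provides; its role is purely to decompose the vertex set into concatenable blocks. So as written, both the absorber construction and the extension phase rest on unproven claims, and the first of the two is the harder of the two problems rather than a routine adaptation of existing technique.
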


We remark that the above result is not true when $n=3$ by considering two directed triangles with opposite orientation, i.e., the red and blue tournaments in \Cref{fig:(thm@rainbow-dir-ham-path)-rmk_description}.

\begin{figure}[htb]
    \centering
    \includegraphics{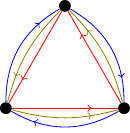}
    \caption{A counterexample for \Cref{thm:rainbow-dir-ham-cycle}, when \(n=3\).}
    \label{fig:(thm@rainbow-dir-ham-path)-rmk_description}
\end{figure}

Our next result establishes a transversal version of~\ref{every strongly connected tournament contains a directed Hamilton cycle}. In fact, we prove a slightly stronger statement.

\begin{theorem}\label{thm:rainbow-dir-ham-cycle}
    For every sufficiently large \(n\), every collection $\TT$ of $n$ tournaments with $|V(\TT)| = n$ satisfies the following. If all tournaments in $\TT$ possibly except one are strongly connected, then $\TT$ contains a transversal Hamilton cycle.
\end{theorem}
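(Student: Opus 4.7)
The plan is to bootstrap from \Cref{thm:rainbow-dir-ham-path} by producing a transversal Hamilton path and then closing it into a cycle using the one remaining tournament together with strong connectivity. After relabelling, assume $T_n$ is the tournament allowed to fail strong connectivity, so that $T_1,\dots,T_{n-1}$ are strongly connected. Applying \Cref{thm:rainbow-dir-ham-path} to the subcollection $(T_2,\dots,T_n)$ yields a transversal Hamilton path $P=v_1v_2\cdots v_n$ whose edges exhaust the colours $\{2,\dots,n\}$; upgrading $P$ to a transversal Hamilton cycle then amounts to inserting exactly one arc of the strongly connected tournament $T_1$.

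If $v_n\to v_1$ is an arc of $T_1$, we are done immediately. Otherwise, we perform a rotation adapted to the transversal setting. Since $T_1$ is strongly connected, $v_n$ has an out-neighbour $v_j\in V(P)\setminus\{v_1,v_{n-1}\}$ in $T_1$. Taking $v_n\to v_j$ as the candidate closing arc reduces the task to finding a transversal Hamilton path from $v_j$ to $v_{j-1}$ on the vertex set $V(\TT)$ using the colour palette $\{2,\dots,n\}$. A Pósa-type rotation reverses the suffix $v_j\cdots v_n$ of $P$ by replacing the internal arc $v_{j-1}\to v_j$ and each arc $v_i\to v_{i+1}$ for $j\le i<n$ by new arcs in the \emph{same} colours, provided suitable substitutes exist in the corresponding tournaments $T_c$. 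Iterating the rotation grows a pool of reachable endpoints for transversal Hamilton paths emanating from $v_1$, and strong connectivity of $T_1$ should force some such endpoint to dominate $v_1$ in $T_1$, closing the cycle.

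The hard part is to guarantee that each rotation actually preserves transversality: when a segment is reversed, for every colour $c$ used on that segment a suitable reverse arc must still be found in $T_c$, and globally the edge-to-colour assignment must remain a bijection to $\{2,\dots,n\}$. Controlling many such simultaneous recolourings is where the \(\Hpart\)-partition highlighted in the abstract should enter: one first constructs an \(\Hpart\)-partition of $\TT$, identifying ``flexible'' blocks in which colour permutations can be absorbed without global conflict, and then runs the rotation-closure step against $T_1$ inside that structured framework. I expect the most delicate case to be when the out-neighbourhoods $N^+_{T_1}(v)$ produced by successive rotations all pile into a small pinned subset of $V(P)$; unbottling such a bottleneck should require appealing to the strong connectivity of a second tournament $T_i$ with $i\ge 2$ to redistribute endpoints, together with the flexibility afforded by the \(\Hpart\)-partition.
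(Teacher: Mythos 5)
Your opening move (apply \Cref{thm:rainbow-dir-ham-path} to $(T_2,\dots,T_n)$ and try to close the resulting path with one arc of the strongly connected $T_1$) is reasonable as a starting point, but the mechanism you propose for repairing the path --- a P\'osa-type rotation that ``reverses the suffix $v_j\cdots v_n$ \dots by new arcs in the same colours'' --- cannot work in this setting. If $\arc{v_iv_{i+1}}\in T_c$, then $\arc{v_{i+1}v_i}\notin T_c$ by the definition of a tournament, so the reversed arc is \emph{never} available in the same colour; worse, if all tournaments happen to agree on the orientation of the pair $\{v_i,v_{i+1}\}$ (e.g.\ when many $T_c$ coincide), the reversed arc lies in no tournament at all, so the reversed segment is not even a directed path in the union. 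Directed rotations of this kind simply do not exist for tournaments, and no amount of colour absorption via the $\Hpart$-partition can recover them. There is also a second, structural gap: \Cref{thm:rainbow-dir-ham-path} gives you no control over the endpoints of the path, and the construction in \Cref{prop:tight-dir-ham-cycle} shows that a collection can admit a transversal Hamilton path while admitting no transversal Hamilton cycle; so any closing argument must exploit the strong connectivity of essentially \emph{all} $n-1$ tournaments, not just $T_1$. You gesture at this at the very end, but supply no mechanism.

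For comparison, the paper avoids rotations entirely. It fixes an $\Hpart$-partition of a majority tournament $T\subseteq\TT^{1/2}$, designates endpoints $x$ and $y$ in the first and last blocks, and proves a dichotomy: either there is a (short) rainbow connection from $y$ back to $x$ --- in which case one deletes its interior, re-forms an $\Hpart$-partition, and invokes \Cref{lem:rainbowDHP} to build the rest of the cycle --- or else $\arc{xy}\in T_i$ for \emph{every} colour $i$ and there are few short rainbow $y\to x$ paths. In the latter case it takes a \emph{longest rainbow path} from $y$ to $x$ (which exists by \Cref{lem:rainbow-strongly-connected-path}) and uses maximality (\Cref{prop: one two}) to show every leftover vertex falls into one of two sets $S^+$, $S^-$ of vertices dominating $x_1$ or dominated by $x_k$ in all unused colours; strong connectivity of $n-1$ tournaments then forces these sets to empty out as the path is extended, and the universally available arc $\arc{xy}$ closes the cycle. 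If you want to salvage your plan, you should replace the rotation step with an extension/maximality argument of this flavour.
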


The above result is not true for $n=3$ by considering three directed triangles where two of them have the same orientation and the third one has the opposite orientation; see \Cref{fig:(thm@rainbow-dir-ham-path)-rmk_description}. The number `one' in \Cref{thm:rainbow-dir-ham-cycle} cannot be replaced by `two' in the above result as shown in the following proposition. 

\begin{proposition}\label{prop:tight-dir-ham-cycle}
    For every $n\ge 3$, there exists a collection $\TT$ of $n$ tournaments with $|V(\TT)| = n$, and all but two tournaments in $\TT$ are strongly connected such that $\TT$ does not contain a transversal Hamilton cycle.
\end{proposition}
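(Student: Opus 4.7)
The plan is to exhibit an explicit counterexample for each $n \geq 3$. On the vertex set $V = \{v_1, \dots, v_n\}$, I take $T_1 = T_2$ to be the transitive tournament with $v_i \to v_j$ for all $i < j$ (so $v_1$ is a source and neither $T_1$ nor $T_2$ is strongly connected), and for each $i \in \{3, \dots, n\}$ I set $T_i = R$, where $R$ is the tournament on $V$ defined by $v_i \to v_j$ iff $j = i - 1$ or $j \geq i + 2$. First I would verify that $R$ is strongly connected: from any $v_k$ with $k \geq 2$ one reaches $v_1$ via the chain $v_k \to v_{k-1} \to \cdots \to v_1$ of short-backward edges, from $v_1$ one reaches $v_j$ with $j \geq 3$ directly, and one reaches $v_2$ via $v_1 \to v_3 \to v_2$. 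Hence exactly two of the tournaments in $\TT$ fail to be strongly connected.

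Next I would isolate the structural features that drive the obstruction. The tournaments $T_1$ and $T_2$ contain only \emph{forward} edges $v_i \to v_j$ with $i < j$, while $R$ contains exactly the \emph{short backward} edges $v_{i+1} \to v_i$ and the \emph{long forward} edges $v_i \to v_j$ with $j - i \geq 2$; in particular $R$ contains no long backward edges and no short forward edges. In any hypothetical transversal Hamilton cycle $C$, the two edges assigned to $T_1$ and $T_2$ must be distinct forward edges of $C$, so $C$ has at least two forward edges, and the remaining $n - 2$ edges must all lie in $R$, which forces every backward edge of $C$ to be of the form $v_{i+1} \to v_i$.

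The main step, and the one I expect to require the most care, is the following combinatorial lemma: every Hamilton cycle on $\{v_1, \dots, v_n\}$ whose backward edges are all short has exactly one forward edge. To prove it, I would argue that the successor of $v_n$ in $C$ cannot lie at a larger index (none exists) nor be a long backward jump (by hypothesis), so it must be $v_{n-1}$; a straightforward downward induction then shows that for each $k$ from $n$ down to $2$, the successor of $v_k$ is forced to be $v_{k-1}$, because the only unvisited alternatives would be forward edges to the already-visited vertices $v_{k+1}, \dots, v_n$. This pins $C$ down uniquely as the cycle $v_n \to v_{n-1} \to \cdots \to v_1 \to v_n$, whose unique forward edge is $v_1 \to v_n$. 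The lemma then contradicts the previous paragraph, so no transversal Hamilton cycle can exist.
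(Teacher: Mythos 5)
Your proposal is correct and uses essentially the same construction as the paper ($T_1=T_2$ transitive, the rest the "short-backward, long-forward" tournament) and the same key observation that a Hamilton cycle with only short backward arcs is forced to be $v_n\to v_{n-1}\to\cdots\to v_1\to v_n$; the paper phrases the final pigeonhole on the $n-1$ backward arcs versus $n-2$ available colors, while you phrase it on the single forward arc versus the two colors $T_1,T_2$, which is the same contradiction.
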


\begin{figure}[htb]
    \centering
    \includegraphics[width=.7\textwidth]{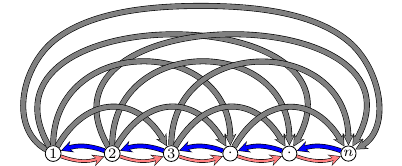}
    \caption{A construction for \Cref{prop:tight-dir-ham-cycle}: each red arrow consists of arcs in \(T_1\) and \(T_2\); each blue arrow consists of arcs in \(T_3,\dots, T_n\); and each gray arrow consists of arcs in \(T_1,\dots,T_n\).}
    \label{fig:(prop@tight-dir-ham-cycle)-pf_description}
\end{figure}

\begin{proof}[Proof of \Cref{prop:tight-dir-ham-cycle}]

Consider the transitive tournament $T$ on $[n]$ containing the arcs $\arc{ij}$ for every $1\le i< j\le n$, and the strongly connected tournament $T'$ on $[n]$ containing the arcs $\arc{ij}$ for every $2\le i+1< j\le n$ and the (backward) arcs $\arc{ji}$ for every $2\le i+1=j\le n$. Consider the collection $\TT=(T_1,\ldots,T_n)$, where $T_1=T_2=T$ and $T_3=\dots=T_n=T'$. We claim that $\TT$ does not contain a rainbow Hamilton cycle. Suppose for contradiction that there is such a cycle. Then, such a cycle must contain a rainbow path $P$ starting from the vertex $n$ and ending at the vertex $1$. Since there are no arcs $\arc{ji}$ with $i\le j-2$ in any color, the length of $P$ must be $n-1$ and it consists of the arcs $\arc{ji}$ with $2\le i+1=j\le n$. However, there are only $n-2$ colors where these backward arcs are there, which contradicts the fact that $P$ is rainbow. This completes the proof of \Cref{prop:tight-dir-ham-cycle}.  
\end{proof}

\paragraph{Remarks on our proof strategy.} We remind the readers that the proofs of the classical results in~\ref{every tournament has a directed Hamilton path} and~\ref{every strongly connected tournament contains a directed Hamilton cycle} are quite straightforward. Indeed, there are various ways to prove those results by using several techniques such as induction, maximality, median orders etc. Such elegant proofs do not apply to \Cref{thm:rainbow-dir-ham-path,thm:rainbow-dir-ham-cycle}. 
An ideal hope to tackle \Cref{question}, in general, is that one uses the corresponding non-transversal result as a `black box' to find a copy of $\calD$ and then color it in a rainbow way. This is indeed possible (by a greedy coloring) when the number of colors is much more than the number of edges of~$\calD$. However, in the literature, this idea of using the non-transversal result directly never succeeded to find transversal when the number of color is exactly the number of edges of~$\calD$. For example, in \cite{montgomery2022transversal}, although the authors intended to use the classical results as `black box' as much as possible to find transversal embeddings, they needed to use absorption technique and partitioned the vertex set into subsets to embed small parts at a time. 
Similarly, in our case, \Cref{thm:rainbow-dir-ham-path} is easy to prove if there is one surplus tournament in the collection $\TT$ (see, e.g., \Cref{lem:rainbow-dir-ham-path_n-cols}). However, squeezing this one extra color seems difficult, and we use the color-absorption lemma from \cite{montgomery2022transversal} to overcome it, and partition the vertex set of the collection of tournaments into smaller convenient subsets, and we will use~\ref{every tournament has a directed Hamilton path} in these subsets. For a more detailed proof sketch, see~\Cref{sec: proofsketch}. Of course, it will be interesting to see if we can find proofs of \Cref{thm:rainbow-dir-ham-path,thm:rainbow-dir-ham-cycle} directly using~\ref{every tournament has a directed Hamilton path} and~\ref{every strongly connected tournament contains a directed Hamilton cycle} and without using absorption.

\paragraph{Organization.}
The rest of the paper is organized as follows. In the next section, we start with collecting a few notations and tools that will be useful throughout the paper, and then we mention brief proof sketches of our main results. In \Cref{sec:transv-ham-paths}, we prove \Cref{thm:rainbow-dir-ham-path} and also establish a general lemma that will be useful to us later. Using this lemma along with some additional arguments, we prove \Cref{thm:rainbow-dir-ham-cycle} in \Cref{sec:rainbow-dir-ham-cycles}. 

\section{Preliminaries}\label{sec:preliminaries}

\subsection{Notations} 
For a positive integer \(n\), we write \([n]\defeq \{1,2,\dots,n\}\) and for two positive integers $a<b$, we write $[a,b]\defeq \{a,a+1,\dots, b\}$.
If we say that a result holds when ${0< \delta \ll \gamma, \beta\ll \alpha<1}$, we mean that there exist non-decreasing functions $f : (0,1] \rightarrow (0,1]$ and $g : (0,1]^2 \rightarrow (0,1]$ such that the result holds when $\gamma,\beta \leq f(\alpha)$ and $\delta \leq g(\gamma, \beta)$. We will often not explicitly calculate these functions.
We omit floors and ceilings where they are not crucial. 

\paragraph{Digraph.}
We use standard terminologies from graph theory. 
Let $\calD$ be a digraph. Denote the set of vertices of $\calD$ by $V(\calD)$ and the set of arcs in $\calD$ by $E(\calD)$. We write $e(\calD)$ to denote the number of arcs in $\calD$. 
For two digraphs $\calD_1$ and $\calD_2$, we denote the disjoint union of them by $\calD_1 \cup \calD_2$.
For a vertex $v$ of $\calD$, denote by $d_\calD^+(v)$ and $d_\calD^-(v)$ the out-degree and in-degree of $v$, respectively. Denote by $N_\calD^+(v)$ and $N_\calD^-(v)$ the out-neighborhood and in-neighborhood of $v$, respectively. 
For \(U\subseteq V(\calD)\) and \(\sigma\in\{+,-\}\), we let \(N_\calD^\sigma(v,U)\defeq N_\calD^\sigma(v)\cap U\) and \(d_\calD^\sigma(v,U)\defeq |N_\calD^\sigma(v,U)|\). We often omit the subscript \(\calD\) if it is clear from the context.
For $U\subseteq V(\calD)$, we denote by $\calD \setminus U$ the digraph induced by $V(\calD)\setminus U$.

For vertices \(u\) and \(v\) in a digraph, the arc from \(u\) to \(v\) is denoted by \(\arc{uv}\), \(\arcrev{vu}\), and we often write \(u\rarc v\) to say that there is an arc directed from \(u\) to \(v\). 
For disjoint vertex sets $X$ and $Y$, we write $E[X, Y]$ as the set of arcs directed from a vertex in $X$ to a vertex in $Y$. 
For a given digraph \(\calD\) and its disjoint vertex subsets \(X\) and \(Y\), we write \(X\outdir Y\) or \(Y\indir X\) when \(\arc{xy}\in E(\calD)\) for every \((x,y)\in X\times Y\).  

A \emph{(directed) path} of length \(\ell\) in a digraph $\calD$ is a sequence of distinct vertices $(v_1, \ldots, v_{\ell+1})$ (and often denoted by \(v_1 \cdots v_{\ell+1}\) or $v_1 \rightarrow \cdots \rightarrow v_{\ell+1}$) such that the arcs \(\arc{v_iv_{i+1}}\in E(\calD)\) for \(i\in [\ell]\). A path $v_1\rightarrow \cdots \rightarrow v_{\ell+1}$ together with an arc $v_{\ell+1}\rightarrow v_1$ is a \emph{(directed) cycle}. Paths or cycles are \emph{Hamilton} in a digraph $\mathcal{D}$ if they cover all the vertices in $\mathcal{D}$.
Consider paths $P_1,\ldots,P_m$ in a digraph $\calD$ where for every $i\in [m]$, the first and the last vertices of $P_i$ are $u_i$ and $v_i$ respectively. If for every $i\in [m-1]$, the arcs $\arc{u_iv_{i+1}}\in E(\calD)$, then we denote the concatenation of the paths $P_1,\ldots,P_m$ by $P_1 \rarc P_2 \rarc \cdots \rarc P_m$ or simply $P_1P_2\cdots P_m$. Sometimes, when the end vertex of $P_1=v_1\dots v_{\ell}$ and the start vertex of $P_2=v_{\ell}\dots v_{k}$ coincide, we write $P_1P_2$ to denote the path $v_1\dots v_k$.

\paragraph{Coloring.}

Let \(\calD\) be a digraph with an edge-coloring \(\varphi:E(\calD) \rightarrow A\) for some color set \(A\). If $\varphi$ is rainbow and uses only colors in some $C\subseteq A$, then we say \(\varphi\) is \emph{\(C\)-rainbow}. In this case, we also say \(\calD\) is \(C\)-rainbow. 
If a coloring \(\varphi\) is being considered in the context, then we write \emph{\(u\crar{c} v\)} to denote \(\varphi(\arc{uv})=c\). We also use this to denote the arc \(\arc{uv}\) colored by \(c\).
If in addition \(\varphi\) uses all the colors in \(C'\subseteq C\), then we say \(\varphi\) is a \emph{\((C,C')\)-rainbow coloring}.


\paragraph{Collection of Tournaments.}
Let \(\TT\) be a collection of tournaments. Define $|\TT|$ to be the number of tournaments in this collection. Unless stated otherwise, we assume \(\TT\) is of the form \(\{T_c:c\in C\}\) for some set \(C\). We say \(\Gamma(\TT)\defeq C\) is the \emph{color set} of \(\TT\). 
For an arc $e$ between two vertices of \(V(\TT)\), we define \(C_{\TT}(e)\defeq \{i\in \Gamma(\TT):e\in T_i\}\), which is the set of the colors which can be used to color \(e\). If $\TT$ is clear from the context, we omit the subscript.
For \(X\subseteq V(\TT)\) and $A\subseteq \Gamma(\TT)$, we define the \emph{vertex-induced collection} $\TT[X] \defeq\{T[X]:T\in\TT\}$ and the \emph{color-induced collection} $\TT_A \defeq \{T_i: i\in A\}$. This naturally yields $\TT_A[X] = \TT[X]_A$.

Let $\TT = \{T_1, \dots , T_m\}$ be a collection of tournaments. For \(\gamma\in(0,1]\), we define $\TT^{\gamma}$ as the digraph on the vertex set $V(\TT)$ with the arc set
\[
    \left\{\arc{uv} :  \left|\{i\in[m]:\arc{uv}\in E(T_i)\}\right|\ge\gamma m\right\}.
\]
A simple pigeonhole principle implies that \(\TT^{\gamma}\) contains at least one tournament as a subdigraph when $\gamma \leq 1/2$.
We write $\TT_A^{\gamma} = (\TT_A)^{\gamma}$ to make the order of subscripts and superscripts clear. 

We frequently use the following straightforward observations. 
For a given collection $\TT$ of tournaments,
\(0<\alpha\le\beta\le\frac{1}{2}\) and \(C_1\subseteq C_2\subseteq\Gamma(\TT)\), we have the following.
\begin{enumerate}
    \item\( \TT^{\beta}\subseteq \TT^{\alpha}.\) \label{stat:aux_rel_1}
    \item \( \TT_{C_2}^{\beta} \subseteq \TT_{C_1}^{\alpha}\) when \((1-\alpha)|C_1|\geq  (1- \beta)|C_2|\).\label{stat:aux_rel_2} 
    \item \( \TT_{C_1}^{\beta} \subseteq \TT_{C_2}^{\alpha}\) when \(\beta|C_1|\geq  \alpha|C_2|\).\label{stat:aux_rel_3}
\end{enumerate}

\subsection{\texorpdfstring{\(\Hpart\)}{\textbf{H}}-partition}
We will partition the vertex set of a given tournament in a desirable way to execute Step~1 of the proof sketch described in \Cref{sec: proofsketch}. For convenience, we give a name to such partitions as follows.
\begin{definition}
Let $0\le \gamma \le 1$ and $r$, $\ell$ be positive integers. Let \(T\) be a tournament.
A tuple \((W_1,\dots, W_r, w_1,\dots, w_{r-1})\) of disjoint vertex subsets \(W_1,\dots, W_r\subseteq V(T)\) and distinct vertices \(w_1,\dots, w_{r-1}\) in \(V(T) \setminus \bigcup_{i\in [r]} W_i\) is an \emph{$\Hpart(\ell,\gamma)$-partition} if the followings hold.
\begin{enumerate}[(1)]
    \item \((\bigcup_{i\in [r]} W_i)\cup\{w_1,\dots,w_{r-1}\}=V(T)\). 
    \item \( \gamma \ell \leq |W_i| \leq \ell \) for each \(i\in [r]\). 
    \item  \(W_{i}\outdir \{w_{i}\} \outdir W_{i+1}\) for each \(i\in [r-1]\). 
\end{enumerate}
\end{definition}

In the above definition, the edges between the vertex $w_i$ and $W_i\cup W_{i+1}$ are often referred to as \emph{intermediate edges}.
Note that an \(\Hpart(\ell,\gamma_1)\)-partition is an \(\Hpart(\ell,\gamma_2)\)-partition whenever \(0<\gamma_2\le\gamma_1\le 1\).
The following lemma proves the existence of an $\Hpart(\ell,\gamma)$-partition.

\begin{lemma}
\label{lem:H-partition}
    Let \(0 < \gamma\le \frac{1}{6}\) and \( \ell,n\) be positive integers with \(3\leq \ell\leq n\). Let \(T\) be a tournament of order \(n\). Then, \(T\) has an \(\Hpart(\ell ,\gamma)\)-partition.
\end{lemma}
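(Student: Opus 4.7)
The plan is to prove the lemma by induction on $n$ with a suitably strengthened hypothesis. The base case $n = \ell$ is trivial: one sets $r = 1$ and $W_1 = V(T)$, whose size $\ell$ satisfies $\gamma\ell \leq \ell \leq \ell$, and there are no intermediate vertices to check.

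For the inductive step $n > \ell$, the idea is to peel off the first pair $(W_1, w_1)$ and apply the induction hypothesis to the smaller tournament $T' \defeq T[V(T) \setminus (W_1 \cup \{w_1\})]$. The subtlety is that the first block of the recursive partition must lie inside $N_T^+(w_1)$ in order to realize the required chained domination $w_1 \outdir W_2$. I therefore plan to prove (by the same induction) a strengthened form of the lemma of the type: \emph{for every tournament $T$ on $n \geq \ell$ vertices and every subset $U \subseteq V(T)$ of size at least some threshold designed to be preserved through the recursion, there exists an $\Hpart(\ell, \gamma)$-partition of $T$ with $W_1 \subseteq U$.}

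One step of the peeling, given $U$, would go as follows. Using the Havet--Thomass\'e property of a median order $u_1, \dots, u_{|U|}$ of $T[U]$, one sees that $u_{\ell+1}$ is dominated by at least $\lceil \ell/2 \rceil \geq \gamma\ell$ of $u_1,\dots,u_\ell$, since $\gamma \leq \frac{1}{6} \leq \frac{1}{2}$. Setting $w_1 \defeq u_{\ell+1}$ and $W_1 \defeq N_T^-(w_1) \cap \{u_1, \dots, u_\ell\}$ immediately gives $W_1 \outdir w_1$, $W_1 \subseteq U$, and $\gamma\ell \leq |W_1| \leq \ell$. One then invokes the strengthened induction hypothesis on $T'$ with the new permitted set $U' \defeq N_T^+(w_1) \cap V(T')$. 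A direct counting using the median-order property in both directions (in-neighbors in $\{u_1,\dots,u_\ell\}$ and out-neighbors in $\{u_{\ell+2},\dots,u_{|U|}\}$) shows that a bound of the shape $|U'| \geq |V(T')|/2$ follows from $|U| \geq n/2$, so the strengthened hypothesis persists.

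The hard part I expect is the endgame: when $|V(T')|$ shrinks to a value close to $\ell$ while $U' \subsetneq V(T')$, the naive base case $r=1$, $W_1 = V(T')$ no longer fits because some vertices lie outside the permitted set. To handle this, one must either peel off the last one or two blocks by hand, arranging that the vertices of $V(T') \setminus U'$ are deposited into a later block via a carefully chosen $w_{r-1}$, or tune the threshold on $|U|$ so that the boundary case never arises. A further wrinkle is that the peeling must be calibrated so that the excess in-neighbors $N_T^-(w_1) \setminus W_1$ do not inflate $V(T') \setminus U'$ too fast; this may require splitting into cases depending on whether $T[U]$ contains a vertex of moderate in-degree (in the interval $[\gamma\ell, \ell]$), in which case one can take $W_1$ to be exactly the in-neighborhood of $w_1$ and lose nothing, or whether all in-degrees are large, in which case the median-order argument above suffices.
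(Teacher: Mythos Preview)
Your plan is genuinely different from the paper's, and as stated it has a real gap.

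The crux of your induction is the claim that the threshold persists: from $|U|\ge n/2$ you want $|U'|\ge|V(T')|/2$. This fails because the median order of $T[U]$ says nothing about arcs between $w_1$ and $V(T)\setminus U$. On the very first peel, where $U=V(T)$, your counting does give $|U'|\ge|V(T')|/2$: writing $n'=|V(T')|=n-|W_1|-1$, the median-order bounds yield
\[
|U'|\;\ge\;\tfrac{n-\ell-1}{2}+(\ell-|W_1|)\;=\;\tfrac{n'}{2}+\tfrac{\ell-|W_1|}{2}\;\ge\;\tfrac{n'}{2}.
\]
But at the second peel one may have $|U'|=n'/2$ exactly, and $w_2=u'_{\ell+1}$ (chosen via a median order of $T'[U']$) can be dominated by every vertex of $V(T')\setminus U'$; nothing in your argument rules this out. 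Then the only guaranteed lower bound is $|U''|\ge\tfrac{|U'|-\ell-1}{2}+(\ell-|W_2|)$, which with $|U'|=n'/2$ and $|W_2|=\ell$ gives $|U''|\approx n'/4$ against $|V(T'')|\approx n'$. The ratio roughly halves at each step, so no fixed threshold of the form $|U|\ge cn$ self-propagates. Your ``further wrinkle'' concerns in-neighbours of $w_1$ inside $U$ (those not placed in $W_1$), but the real loss comes from in-neighbours of $w_1$ in $V\setminus U$, which are invisible to the median order of $T[U]$.

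By contrast the paper avoids tracking any permitted set. It argues top-down: the trivial one-block partition is an $\Hpart(n,\gamma)$-partition; if $\ell$ is maximal with no $\Hpart(\ell,\gamma)$-partition, take an $\Hpart(\ell+1,\gamma)$-partition minimising the number of parts of size exactly $\ell+1$, and split any such part $W_{i_0}$ in place. The split uses only the fact that at most $2d+1$ vertices of a tournament have in-degree $\le d$ (and dually for out-degree), so $T[W_{i_0}]$ contains a vertex $v$ with both in- and out-degree at least $|W_{i_0}|/6\ge\gamma(\ell+1)$; one replaces $W_{i_0}$ by $(N^-(v)\cap W_{i_0},\,v,\,N^+(v)\cap W_{i_0})$. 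This is exactly where the hypothesis $\gamma\le\tfrac16$ is used, and no invariant beyond the $\Hpart$-axioms needs to be carried through.
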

\begin{proof}
Note that the statement is clear when $\ell=n$ as \(V(T)\) is a trivial \(\Hpart(\ell ,\gamma)\)-partition. Assume that \(\ell\ge 3\) is the largest possible number such that \(T\) does not have a \(\Hpart(\ell ,\gamma)\)-partition. As $\ell<n$, by the maximality of $\ell$, the tournament $T$ has an \(\Hpart(\ell+1 ,\gamma)\)-partition. Among all \(\Hpart(\ell+1 ,\gamma)\)-partitions, choose one \(P=(W_1,\dots, W_t, w_1,\dots, w_{t-1})\) such that the number of $i\in [t]$ satisfying $|W_i|=\ell+1$ is the smallest.
If every $i\in [t]$ satisfies \(|W_i| <\ell+1\), then it is also an \(\Hpart(\ell ,\gamma)\)-partition, a contradiction. 
Hence, we may assume that there exists $i_0\in [t]$ with $|W_{i_0}|=\ell+1$. 
We use the following claim.

\begin{claim} \label{clm:num-of-small-indeg-vtxs_small}
    For a tournament \(T\) and \(d\ge0\), there are at most \(2d+1\) vertices whose in-degree (resp.\ out-degree) is at most \(d\).
\end{claim}
\begin{claimproof}
    Let \(V(T)=\{v_1,\dots,v_n\}\) be an enumeration of the vertices of \(T\) which maximizes \(|\{\arc{v_iv_j} : i<j\}|\) (this is called a median order). As noted in \cite[Section~1]{havet2000median}, the in-degree of \(v_j\) in \(\{v_1,\dots,v_{j-1}\}\) is at least \((j-1)/2\) for each \(j\ge1\), so the vertices \(v_j\) with \(j\ge2d+2\) have in-degree at least \(d+1\). Thus there are at most \(2d+1\) vertices whose in-degree is at most \(d\). The out-degree case follows by the dual argument.
\end{claimproof}

By \Cref{clm:num-of-small-indeg-vtxs_small}, we can partition \(W_{i_0}\) into three sets \( W^{-}_{i_0}, \{v\}, W^{+}_{i_0}\) such that \( W^{-}_{i_0} \outdir \{v\} \outdir W^{+}_{i_0}\) and
\(|W^-_{i_0}|, |W^+_{i_0}| \geq \frac{1}{6} |W_{i_0}| \geq \gamma (\ell +1)\).
Now consider the partition 
\begin{align*}
    P' &= (W'_1,\dots, W'_{t+1}, w'_1,\dots, w'_{t}) \\
    &= (W_1,\dots, W_{i_0-1}, W^{-}_{i_0},W^{+}_{i_0}, W_{i_0+1},\dots, W_t, w_1,\dots, w_{i_0-1}, v, w_{i_0+1},\dots, w_{t-1}).
\end{align*}
We claim that this is an \(\Hpart(\ell+1 ,\gamma)\)-partition having one smaller number of $i\in [t+1]$ with $|W'_i|=\ell+1$ compared to $P$.
Indeed, since $W_{i_0}$ is partitioned, it is clear that $P'$ contains less number of vertex sets of size $\ell+1$. Also, it is clear that $P'$ is an \(\Hpart(\ell+1,\gamma)\)-partition, which is a contradiction.
This completes the proof of \Cref{lem:H-partition}.
\end{proof}

\subsection{Color absorption}

The following lemma in \cite{montgomery2022transversal} provides a color absorber. This concept of color absorber was very useful for finding a spanning transversal copy in a given collection of graphs, such as $F$-factors and bounded degree trees~\cite{montgomery2022transversal}, powers of Hamilton cycles~\cite{gupta2023general}, bounded degree graphs with sublinear bandwidth~\cite{chakraborti2023bandwidth}, and Hamilton cycles and other spanning structures in hypergraphs~\cite{cheng2021transversal}.
Again, this allows us to find a color absorber in an appropriate collection of tournaments.

\begin{lemma}[{\cite[Lemma~3.3]{montgomery2022transversal}}]\label{lem:absorber_prelim}
    Let $\alpha \in (0, 1)$, let $n, m$ and $\ell\geq 1$ be integers satisfying $\ell\leq \alpha^7 m/ 10^5$ and $\alpha^2 n \geq 8m$. Let $H$ be a bipartite graph on vertex classes $A$ and $B$ such that $|A| = m$, $|B| = n$ and, for each $v\in A$, $d_H(v)\geq \alpha n$.
    
    Then, there are disjoint subsets $B_0, B_1\subseteq B$ with $|B_0| = m-\ell$ and $|B_1|\geq \alpha^7 n/ 10^5$, and the following property. Given any set $U\subseteq B_1$ of size $\ell$, there is a perfect matching between $A$ and $B_0\cup U$ in $H$.
\end{lemma}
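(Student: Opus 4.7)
The plan is to follow the standard template-absorber paradigm: reserve a random subset $B_1 \subseteq B$ to serve as a flexible pool, build a small absorbing gadget on a set $A^{\ast} \subseteq A$ that can reroute through any $\ell$-subset of $B_1$, and complete the remaining assignment by a Hall's-theorem argument in the rest of $H$. First I would pick $B_1 \subseteq B$ uniformly at random of size roughly $\alpha^7 n/10^5$; by a standard Chernoff plus union-bound argument, using $d_H(a) \geq \alpha n$ and $|A| = m \leq \alpha^2 n/8$, with positive probability every $a \in A$ has at least $\alpha |B_1|/2$ neighbors in $B_1$. Fix such a $B_1$.

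The heart of the argument is the absorber construction. For two vertices $b, b' \in B_1$, call a small subgraph of $H$ a \emph{$(b,b')$-switcher} if it admits two matchings saturating its $A$-part, one covering $b$ and avoiding $b'$ and the other covering $b'$ and avoiding $b$. The inherited minimum degree into $B_1$ guarantees that switchers exist abundantly. Following Montgomery's construction, I would wire up switchers along the edges of a fixed bounded-degree bipartite expander, producing a set $A^{\ast} \subseteq A$ of size $\Theta(\ell)$, a ``base'' matching from $A^{\ast}$ onto some $Y \subseteq B$ with $\ell$ distinguished slot vertices inside $Y \cap B_1$, and the guarantee that for \emph{any} $U \subseteq B_1 \setminus Y$ with $|U| = \ell$ one can find a perfect matching from $A^{\ast}$ onto $(Y \setminus \text{slots}) \cup U$.

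Finally, I would apply Hall's theorem to extend the base matching: match $A \setminus A^{\ast}$ into $B \setminus (Y \cup B_1)$. The degree losses from removing at most $O(\alpha^7 n/10^5)$ vertices are negligible against $\alpha n$, while $|B| - |A| \geq (1-\alpha^2/8)n$ supplies an overwhelming surplus, so Hall's condition is routine to verify. Setting $B_0$ to be the union of $Y \setminus \text{slots}$ and the image of this second matching, and redeclaring $B_1$ to be the slot set together with $B_1 \setminus Y$, yields the desired pair.

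The main obstacle is the construction of the expander-based template: it must guarantee absorption for \emph{every} $\ell$-subset simultaneously rather than only a typical one, and a purely greedy approach fails because fixing the assignment on one absorbed vertex can destroy the flexibility needed for the others. The expander's robust matching property, combined with Hall-type arguments on its deficiency graphs, is precisely what enables universal absorption, and the exponent $7$ in $\alpha^7$ reflects the accumulated slack needed for reservoir sampling, switcher depth, and the expander's robustness margin.
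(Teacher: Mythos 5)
You are reconstructing a result that this paper does not prove at all: \Cref{lem:absorber_prelim} is imported verbatim from Montgomery, M\"uyesser and Pehova \cite{montgomery2022transversal} (their Lemma~3.3), and the authors use it as a black box. So there is no in-paper proof to compare against; the comparison has to be with the original source, and your outline does correctly name its strategy --- a randomly reserved pool $B_1$ in which every vertex of $A$ retains proportional degree, a bounded-degree ``robustly matchable'' template giving universal (not merely typical) absorption, and a greedy/Hall completion for $A\setminus A^{\ast}$, which is indeed routine given $\alpha n\ge 8m/\alpha$.

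As a proof, however, the proposal has a genuine gap: the entire content of the lemma lives in the step you defer with ``following Montgomery's construction, I would wire up switchers along the edges of a fixed bounded-degree bipartite expander.'' Nothing in your write-up constructs the template, verifies that it can be embedded into $H$ using only the guaranteed degrees into $B_1$, or proves its universal matching property; these are precisely the non-routine parts. Two concrete consequences of this omission surface in your own final paragraph. First, the absorber property you assert (a perfect matching onto $(Y\setminus\text{slots})\cup U$ for every $U\subseteq B_1\setminus Y$ of size $\ell$) is strictly weaker than what the lemma requires after you redeclare $B_1$ to be the slot set together with $B_1\setminus Y$: a set $U$ of size $\ell$ in the new $B_1$ may contain some slots and some outside vertices, and handling such mixed sets is exactly the ``any $\ell$-subset of the flexible set'' property that the template must be built to deliver. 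Second, your $A^{\ast}$ has size $\Theta(\ell)$ and bounded degree, hence touches only $O(\ell)$ vertices of $B$, while the lemma promises that \emph{every} $\ell$-subset of a pool $B_1$ of size $\Theta(\alpha^7 n)\gg \ell$ can be absorbed; you need to explain how a template of size $O(\ell)$ services a linear-sized flexible pool (in the original argument this is arranged by how the flexible set of the template is identified with vertices of $B_1$ and by the defect-Hall analysis of the template, not by the gadget count alone). Until the template is actually constructed and these two points are resolved, the argument is a plausible plan rather than a proof.
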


Using this, we can easily deduce the following color absorption lemma, which fits our setting.

\begin{lemma}[Color absorption lemma] \label{lem:absorber}
    Let \(0<1/n\ll\gamma\ll\beta\ll\alpha \leq 1/2\). Let \(H\) be a digraph with \(\beta n\leq e(H)\leq (\beta+ \frac{1}{2}\gamma ) n\), and \(\TT\) be a collection of tournaments with \(|V(\TT)|=n\) and \(|\TT|=m\ge\alpha n\). Let \(S\) be a copy of \(H\) in \(\TT^{\alpha}\).
    
    Then, there exist disjoint sets \(A,C\subseteq[m]\), with \(|A|=e(H)-\gamma m\) and \(|C|\ge 10\beta m\) such that the following property holds. Given any subset \(C'\subseteq C\) of size \(\gamma m\), there is a rainbow coloring of \(S\) in \(\TT\) using colors in \(A\cup C'\).
\end{lemma}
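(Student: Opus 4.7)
The plan is to recast the problem as a bipartite matching problem on an auxiliary ``compatibility'' graph, then invoke Lemma~\ref{lem:absorber_prelim} as a black box. Let $\Gamma$ be the bipartite graph with parts $E(S)$ and $[m]$ in which an arc $e\in E(S)$ is joined to a color $c\in[m]$ precisely when $e\in E(T_c)$. The key observation is that, for any $Z\subseteq[m]$, a perfect matching between $E(S)$ and $Z$ in $\Gamma$ is the same data as a $Z$-rainbow coloring of $S$ in $\TT$ (the matched color for each arc is its assigned color). Since $S$ is a copy of $H$ inside $\TT^{\alpha}$, each arc $e\in E(S)$ satisfies $d_\Gamma(e)=|C_\TT(e)|\ge\alpha m$, so the minimum-degree hypothesis of Lemma~\ref{lem:absorber_prelim} is satisfied on the $E(S)$-side with parameter $\alpha$.

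Next, I would apply Lemma~\ref{lem:absorber_prelim} to $\Gamma$ with the matching side $A_{\text{Lem}}=E(S)$ (so $m_{\text{Lem}}=e(H)$), absorbing side $B_{\text{Lem}}=[m]$ (so $n_{\text{Lem}}=m$), min-degree parameter $\alpha$, and absorption parameter $\ell=\gamma m$. The two numerical hypotheses become
\[
    \gamma m \;\le\; \frac{\alpha^7 e(H)}{10^5} \qquad\text{and}\qquad \alpha^2 m \;\ge\; 8\,e(H).
\]
Both follow from the hierarchy $1/n\ll\gamma\ll\beta\ll\alpha\le 1/2$ in combination with the given bounds $\beta n\le e(H)\le(\beta+\tfrac{1}{2}\gamma)n$ and $m\ge\alpha n$: the second reduces to $\alpha^3\ge 8(\beta+\tfrac{1}{2}\gamma)$, and the first to a comparable inequality once $e(H)$ and $m$ are expressed in terms of $n$.

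Lemma~\ref{lem:absorber_prelim} then yields disjoint subsets $B_0,B_1\subseteq[m]$ with $|B_0|=e(H)-\gamma m$ and $|B_1|\ge\alpha^7 m/10^5$, such that for every $U\subseteq B_1$ of size $\gamma m$, the bipartite graph $\Gamma$ admits a perfect matching between $E(S)$ and $B_0\cup U$. Setting $A\defeq B_0$ and $C\defeq B_1$, the size $|A|=e(H)-\gamma m$ is immediate, and $|C|\ge \alpha^7 m/10^5\ge 10\beta m$ follows from $\beta\ll\alpha$. Translating each such perfect matching back through the definition of $\Gamma$ produces the desired rainbow coloring of $S$ in $\TT$ using colors in $A\cup C'$ for any $C'\subseteq C$ with $|C'|=\gamma m$.

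The only real obstacle is the parameter bookkeeping: one must confirm that the hierarchy $\gamma\ll\beta\ll\alpha$ is tight enough to simultaneously satisfy both numerical hypotheses of Lemma~\ref{lem:absorber_prelim} and to guarantee $|C|\ge 10\beta m$ from the output $|B_1|\ge\alpha^7 m/10^5$. Once these routine inequalities are checked, the proof reduces entirely to translating between ``perfect matching in $\Gamma$'' and ``rainbow coloring of $S$'', and nothing further is required.
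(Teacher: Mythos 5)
Your proposal is correct and matches the paper's proof essentially verbatim: the paper also forms the bipartite graph between $E(S)$ and $[m]$ with edges given by arc--color compatibility and applies Lemma~\ref{lem:absorber_prelim} with $(\ell,m,n)=(\gamma m, e(H), m)$. Your extra verification of the numerical hypotheses (e.g.\ $\alpha^3\ge 8(\beta+\tfrac12\gamma)$ and $|B_1|\ge\alpha^7 m/10^5\ge 10\beta m$) is sound and only makes explicit what the paper leaves implicit.
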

\begin{proof}
    Let \(K\) be the bipartite graph with vertex classes \(E(S)\) and \([m]\), where \((e,i)\) is an edge if \(e\in T_i\). Then \Cref{lem:absorber_prelim} applies with \((\ell,m,n)=(\gamma m, e(H), m)\). This yields disjoint \(A,C\subseteq[m]\) with \(|A|=e(H)-\gamma m\) and \(|C|\ge 10\beta m\), such that, for any \(C'\subseteq C\) of size \(\gamma m\) there is a perfect matching between \(E(S)\) and \(A\cup C'\). Such a matching corresponds to an (\(A\cup C'\))-rainbow coloring of \(S\) in \(\TT\), as required.
\end{proof}

\subsection{Proof sketches} \label{sec: proofsketch}
Here, we outline the major steps involved in the proofs of our results. We start with \Cref{thm:rainbow-dir-ham-path}.
Firstly, when we have many spare colors, it is easy to find a rainbow Hamilton path. Indeed, if $\TT$ has at least \(2n\) tournaments, then one can find a Hamilton path $P$ in \(\TT^{1/2}\) and then $P$ can be greedily colored in a rainbow way as $\frac{1}{2}\cdot 2n > n-1$ colors are available for each edge. 
In fact, as we will see in \Cref{lem:rainbow-dir-ham-path_n-cols}, one can find a rainbow Hamilton path even when only one spare color is provided. However, it is not trivial to find a transversal Hamilton path when the number of colors is exactly $n-1$.

To overcome the difficulties, we decompose the path of order $n$ in smaller subpaths, and iteratively apply the above to embed paths into these smaller parts. 
This yields a rainbow linear forest of at least $(1-o(1))n$ edges. To convert this into a Hamilton path, we connect them and use the color absorption lemma. This idea is elaborated below. We will choose constants $\mu,\gamma,\beta$ so that $0 < 1/n\ll \mu \ll \gamma \ll \beta \ll 1$ holds.
    \bigskip

    \noindent {\bf Step 1: Partition the vertex set.}
    We first consider a tournament $T\subseteq \TT^{1/2}$. Then, consider an $\Hpart(\mu n, \gamma)$-partition \((W_1,\dots, W_r, w_1,\dots, w_{r-1})\) of \(T\). 
    Let $E_{i,1}:= \{\arc{vw_i} : v\in W_i\}$ and 
    $E_{i,2}:= \{\arc{w_iv} : v\in W_{i+1}\}$. 
    We plan to later find Hamilton paths $P_i$ in each of the $W_i$'s which yield a Hamilton path $P_1 e_{1,1} e_{1,2} P_2 e_{2,1} e_{2,2} P_3 \cdots P_r$ of $T$ for some $e_{i,j}\in E_{i,j}$. In the next steps, we illustrate how to obtain such paths so that one can find a rainbow coloring of the final Hamilton path.
    \bigskip
    
    \noindent {\bf Step 2: Partition the color set and find a color absorber.}
    We set aside a set \(D\) of at most $\mu^2 n$ colors such that for any collection \( \{e_{i,j}: i\in [r-1],\, j\in [2]\}\) of one arc from each $E_{i,j}$, we can find a \(D\)-rainbow coloring of these arcs. 
    Let $t\in [r]$ be such that $\sum_{i\in [t]} (|W_i|-1) = \beta n + o(n)$.
    For each $i\in [t]$, let $P_i$ be a Hamilton path of $T[W_i]$. Next, utilize \Cref{lem:absorber} to find disjoint \(A,C\subseteq \Gamma(\TT)\setminus D\) such that $|A| = \beta n - \gamma n$ and $|C| = 10\beta n$ and for any \(C'\subseteq C\) of size \(\gamma n\), we can find an (\(A\cup C'\))-rainbow coloring of the arcs in \(\bigcup_{i\in [t]} P_i\). Denote \(B\defeq \Gamma(\TT)\setminus(A\cup C\cup D)\). 
    \bigskip
    
    \noindent {\bf Step 3: Use most of the colors in {\boldmath \(B\)} and color most intermediate arcs.}
    We first choose a number $\tau\in [t+1,r-1]$. For each $i\in [t+1,r]\setminus\{\tau\}$ one by one, we consider the collection of tournaments $\TT_{C'}[W_i]$ where $C'\subseteq B\cup C$ is the set of current available colors.
    Then we find a Hamilton path in  $\TT_{C'}[W_i]$ and color the arcs of this path. As $|W_i|$ is much smaller than $|C'|$, we can greedily color the arcs. We can repeat this for all $i\in [t+1,r]\setminus\{\tau\}$.
    Furthermore, we can ensure that all colors of $B$ are used, possibly except at most $r$ colors.
    This procedure can be done using \Cref{lem:rainbow-dir-ham-path_n-cols}.
    After this step, suppose $C^*,B^*$ denote the remaining unused colors in $C,B$ respectively.

    Let $u_i,v_i$ denote the starting and ending vertices of the Hamilton paths considered inside $W_i$. We now $D$-rainbow color the intermediate arcs $\{\arc{w_i u_{i+1}} : i\in [r-1]\}\cup \{\arc{v_i w_i}: i\in [r-1]\}$. Let $D^*$ be the remaining unused colors in $D$. 
    \bigskip
    
    \noindent {\bf Step 4: Absorb the colors in {\boldmath \(B^*\cup D^*\)} using the colors in {\boldmath \(C^*\)}.}
    Since the number of colors in ${B^*\cup D^*}$ is small enough, we can embed a rainbow Hamilton path in $W_\tau\cup \{w_{\tau-1}, w_{\tau}\}$ that starts from the vertex $w_{\tau-1}$ to $w_{\tau}$ using up all the colors in $B^*\cup D^*$ and some colors in $C^*$. This is done by applying \Cref{lem:embed-linear-fixed-color}.
    Finally, we use the remaining unused colors in \(C\) along with those in \(A\) to color the arcs in $P_i$ with $i\in [t]$, which completes the desired transversal Hamilton path in $\TT$.
    \bigskip

To prove \Cref{thm:rainbow-dir-ham-cycle}, we again need to use similar arguments to find a Hamilton path in $\TT$ along with some extra arguments to close the path to a Hamilton cycle. 
Thus, we extract a lemma (see~\Cref{lem:rainbowDHP}) capturing these similar arguments that can be directly applicable to both \Cref{thm:rainbow-dir-ham-path,thm:rainbow-dir-ham-cycle}.

Similar to before, consider a tournament $T\subseteq \TT^{1/2}$.
By \Cref{lem:H-partition}, we first find an $\Hpart(\mu n, 1/6)$-partition $(W_0, W_1, \dots, W_{r+1}, w_0, \dots, w_{r})$ of $T$. 
We next find a Hamilton path $P_0$ of $T[W_0\cup \{w_0\}]$ from some vertex $x$ to $w_0$ and a Hamilton path $P_{r+1}$ of $T[W_{r+1}\cup \{w_r\}]$ from $w_r$ to some vertex $y$. 
If $\arc{yx}\in T_i$ for some $i\in [n]$, then we can utilize the $\Hpart$-partition to find a $[n]\setminus\{i\}$-rainbow Hamilton path from $x$ to $y$, yielding a desired rainbow Hamilton cycle. So we may assume that $\arc{xy}\in T_i$ for all $i\in [n]$.

If there are many internally disjoint short rainbow paths from $y$ to $x$, then we can choose one such short path $P$ from $y$ to $x$ that does not use any vertex in $W_0\cup W_{r+1} \cup \{w_0, \dots, w_{r}\}$.
After removing the vertices used in $P$, the modified partition \((W_0,W_1\setminus V(P),W_2\setminus V(P),\dots,W_r\setminus V(P),W_{r+1}, w_0,\dots,w_r)\) still forms an $\Hpart(\mu n, 1/10)$-partition of $T\setminus V(P)$.
This fact then helps us to carry through the same 4 steps to find an appropriate rainbow Hamilton path $P'$ in $V(\TT)\setminus (W_0\cup W_{r+1}\cup V(P))$ from $w_0$ to $w_r$, yielding a desired rainbow Hamilton cycle.

Hence, we are left with the case that
$\arc{xy}\in T_i$ for all $i\in [n]$ and there are not many internally disjoint short rainbow paths from $y$ to $x$. These two properties will be useful.

We consider a longest rainbow path $P^*=x_1\to \dots \to x_k$ from $y=x_1$ to $x=x_k$ and let $D$ be the set of colors not used by the path. If this is a Hamilton path, then we can close it to a rainbow cycle as $\arc{xy}\in T_c$ for the remaining color $c$.
If not, the maximality of this path provides information between the leftover vertices $z$ and $V(P^*)$. For example, if $\arc{zx_i}\in T_j$ for some $j\in D$, then $\arc{x_{i-1}z}\notin T_{j'}$ for $j'\in D\setminus \{j\}$ as otherwise we can find a longer path $x_1\to \dots x_{i-1} \crar{j'} z \crar{j} x_i \to \dots \to x_k $. This fact together with the lack of many internally disjoint short rainbow paths from $y$ to $x$ allows us to obtain patterns on the directions of the arcs between the vertices outside $P^*$ and the vertices in $V(P^*)$.

Once we have the patterns, some clever choices of maximality together with the strong connectedness of the tournaments yield a rainbow Hamilton path that can be closed into a rainbow Hamilton cycle.
Details are provided in \Cref{sec:rainbow-dir-ham-cycles}.

\section{Transversal Hamilton paths}
\label{sec:transv-ham-paths}
 
In this section, we prove \Cref{thm:rainbow-dir-ham-path}. 
The following lemma shows that one can find a transversal Hamilton path with one additional color. 

\begin{lemma}
\label{lem:rainbow-dir-ham-path_n-cols}
    For any collection of tournaments $\TT$ with $|V(\TT)| = |\TT| = n$, there is a rainbow Hamilton path.
\end{lemma}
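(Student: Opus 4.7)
My plan is to argue via a longest rainbow (directed) path \(P\) in \(\TT\) and an insertion-blocking analysis that exploits the fact that \(P\) has one fewer edge than the number of available colors, which leaves one color of slack to maneuver with.

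Formally, let \(P = w_1 \to \cdots \to w_{\ell+1}\) be a rainbow path of maximum length with coloring \(\phi(\arc{w_iw_{i+1}}) = c_i\), and assume for contradiction \(\ell \le n-2\). Set \(U \defeq V(\TT) \setminus V(P)\) and \(C' \defeq [n] \setminus \{c_1,\dots,c_\ell\}\), so \(|U| = n - \ell - 1 \ge 1\) and \(|C'| = |U|+1 \ge 2\). Maximality of \(P\) rules out prepending and appending: for every \(u \in U\) and \(c \in C'\) one must have \(w_1 \to u \in T_c\) and \(u \to w_{\ell+1} \in T_c\), since otherwise \(u \to w_1\) or \(w_{\ell+1} \to u\) (in the offending \(T_c\)) would properly extend \(P\).

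The heart of the argument is an insertion claim, propagated by induction on the position \(i \in [\ell]\). Inserting \(u \in U\) between \(w_i\) and \(w_{i+1}\) replaces \(\arc{w_iw_{i+1}}\) (colored \(c_i\)) with \(w_i \to u \to w_{i+1}\), and a rainbow extension requires two distinct colors \(x,y \in C' \cup \{c_i\}\) with \(w_iu \in T_x\) and \(uw_{i+1} \in T_y\). Setting \(A_i(u) \defeq \{c \in C' \cup \{c_i\} : w_iu \in T_c\}\) and \(B_i(u) \defeq \{c \in C' \cup \{c_i\} : uw_{i+1} \in T_c\}\), whenever \(|A_i(u)| \ge 2\), blockage by maximality forces \(B_i(u) = \emptyset\); equivalently, \(w_{i+1} \to u \in T_c\) for every \(c \in C' \cup \{c_i\}\). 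The base case \(i=1\) has \(A_1(u) \supseteq C'\) by prepending-maximality; for the inductive step, the previous conclusion gives \(w_i \to u \in T_c\) for all \(c \in C' \cup \{c_{i-1}\}\), so intersecting with \(C' \cup \{c_i\}\) and using the rainbow property \(c_{i-1} \ne c_i\) still yields \(A_i(u) \supseteq C'\), and the argument runs on.

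At \(i = \ell\), the induction yields \(w_{\ell+1} \to u \in T_c\) for every \(c \in C'\), which flatly contradicts the appending-maximality statement \(u \to w_{\ell+1} \in T_c\) for \(c \in C'\), since each \(T_c\) is a tournament. Therefore \(\ell = n-1\) and \(P\) is a rainbow Hamilton path. The only subtle point I expect is packaging the insertion-blockage analysis correctly, namely the observation that \(|A_i(u)| \ge 2\) with the insertion blocked forces \(B_i(u) = \emptyset\) rather than permitting the degenerate case \(A_i(u) = B_i(u) = \{c\}\); once this is pinned down and the invariant \(A_i(u) \supseteq C'\) is identified, the contradiction drops out of one use of rainbowness.
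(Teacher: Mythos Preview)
Your proof is correct and follows essentially the same approach as the paper: take a longest rainbow path, use prepending-maximality at the start, propagate ``all unused colors point from the path to the outside vertex'' along the path via the insertion argument, and derive a contradiction at the end from appending-maximality. The paper's write-up is a bit leaner---it fixes just two unused colors and phrases the induction as a minimal-counterexample argument, and it does not bother with the freed color \(c_i\)---but the underlying idea is identical.
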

\begin{proof}
    Take a longest rainbow path $P=u_1 \rarc u_2 \rarc \cdots \rarc u_r$ in $\TT$. Suppose  $|V(P)| < n$. Then, there is at least one vertex $v \in V(\TT)\setminus V(P)$ and at least two unused colors, say $1$ and $2$. We have \(\arc{u_1v}\in E(T_1)\) since otherwise there is a rainbow path \(v\crar{1} P\) longer than \(P\), a contradiction to the maximality of \(P\). Similarly, we have $\arc{u_1v} \in E(T_2)$.
    
    We claim that \(\arc{u_tv}\in E(T_i)\) holds for all \(t\in [r]\) and \(i\in \{1,2\}\). If not, let \(2\le t'\le r\) be the smallest index where \(\arcrev{u_{t'}v}\in E(T_j)\) for some \(j\in\{1,2\}\). By the minimality of \(t'\), we have \(\arc{u_{t'-1}v}\in E(T_{3-j})\), which gives a rainbow path \(u_1 \rarc \cdots \rarc u_{t'-1} \crar{3-j} v \crar{j} u_{t'} \rarc \cdots \rarc u_r\) longer than \(P\), a contradiction. Thus the claim holds, whence in particular \(\arc{u_rv}\in E(T_i)\) for \(i\in \{1,2\}\). However, this gives $P\crar{1}v$ which is again a rainbow path longer than \(P\), a contradiction. Therefore, $|V(P)| = n$ and $P$ is a rainbow Hamilton path.
\end{proof}

Having more additional colors, instead of just one, allows us to find a rainbow Hamilton path in a robust way: we can designate some colors to appear on the path. The following lemma allows us to ensure that one fixed color must appear on the path. Using this, we can even make sure that linearly many preselected colors appear on the path.

\begin{lemma}\label{lem:embed-one-fixed-color}
    Let $\TT$ be a collection of tournaments with $|V(\TT)| = n\ge 2$ and $|\TT| \geq 2n$. 
    For each $i\in \Gamma(\TT)$, there is a rainbow Hamilton path in \(\TT\) using an arc of $T_i$, except for the case where  the tournament $T_i$ is a directed triangle with $n=3$, and the other tournaments in $\TT$ are directed triangles with the opposite orientation.
\end{lemma}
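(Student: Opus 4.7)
The plan is to show that a longest rainbow path in $\TT$ containing color $i$ must be Hamilton, unless the exceptional case holds. Take such a path $P = u_1 \rarc \cdots \rarc u_r$ with the color-$i$ arc at position $s$, so $\arc{u_s u_{s+1}}$ is colored $i$ (such a path exists by starting from any arc $\arc{xy} \in T_i$ colored $i$). If $r = n$, we are done. Otherwise, let $v \in V(\TT) \setminus V(P)$ and let $C$ denote the set of unused colors, so $|C| \ge 2n - (r-1) \ge n + 2$.

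First, following the argument of \Cref{lem:rainbow-dir-ham-path_n-cols} but restricted to extensions that preserve the color-$i$ arc (prepending, appending, or inserting $v$ at any position $t \ne s$), I would derive the \emph{structured pattern}: for every $c \in C$, $\arc{u_t v} \in T_c$ for all $t \le s$ and $\arc{v u_t} \in T_c$ for all $t \ge s+1$. Next, to obstruct extensions that reassign color $i$ to a different arc, I would show that $\arc{v u_s}, \arc{u_{s+1} v} \in T_i$ (otherwise insert $v$ at position $s$ using the $T_i$-arc colored $i$ and the other new arc colored from $C$); that $\arc{u_{t+1} u_t} \in T_i$ for each $t \in [r-1]\setminus\{s\}$ (otherwise insert $v$ at position $s$ while recoloring $\arc{u_t u_{t+1}} \in T_i$ to~$i$ and the new arcs from $C$); and, when $r \ge 3$, that $\arc{u_{s+1} u_s} \in T_c$ for all $c \in C$ (by inserting $v$ at position $s-1$ or $s+1$ with $\arc{v u_s}$ or $\arc{u_{s+1} v}$ colored $i$ while recoloring $\arc{u_s u_{s+1}}$ using an available color in $C$).

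Finally, I would split into cases. In $(r,n)=(2,3)$ the single external vertex and the above constraints pin $T_i$ as $u_1 \to u_2 \to v \to u_1$ and each $T_c$ with $c\in C$ as the opposite triangle $u_1 \to v \to u_2 \to u_1$, which is exactly the exceptional case. In $r=2$ with $n \ge 4$, two external vertices $v,v'$ yield a rainbow Hamilton path of the form $u_1 \to v \to u_2 \to v' \to \cdots$ with $\arc{u_2 v'}$ colored $i$ and the other arcs colored by distinct colors from $C$, extended through further externals by using pigeonhole on arc directions guaranteed by $|\TT|\ge 2n$. For $r \ge 3$, consider the Hamilton path on $V(P)\cup\{v\}$
\[
    Q = v \rarc u_r \rarc u_{r-1} \rarc \cdots \rarc u_{s+1} \rarc u_s \rarc \cdots \rarc u_1,
\]
whose arcs split as $\arc{v u_r}, \arc{u_{s+1} u_s} \in T_c$ for any $c \in C$, while each $\arc{u_{t+1} u_t}$ with $t \ne s$ lies in $T_i$. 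One then assigns color $i$ to a single $T_i$-arc and distributes distinct colors from $C \cup (\Gamma(\TT)\setminus\{i\})$ to the remaining arcs so each is valid in its assigned tournament, extending $Q$ further through any additional external vertices, to obtain a longer rainbow path using $i$ and hence a contradiction to the maximality of $P$.

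The main obstacle will be this $r \ge 3$ case: achieving a valid rainbow coloring of $Q$ becomes delicate when several of the $T_i$-arcs $\arc{u_{t+1} u_t}$ are locked to $T_i$ alone (i.e., not in any $T_c$ with $c \ne i$), since only one can be colored $i$. In that situation one instead uses an alternative Hamilton path that switches to the forward arcs $\arc{u_t u_{t+1}}$, which lie in many of the other tournaments by pigeonhole on $|\TT|\ge 2n$, together with the structured arcs incident to $v$, to avoid the locked arcs while still using a single $T_i$-arc for color $i$.
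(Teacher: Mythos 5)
Your approach is genuinely different from the paper's (which proceeds by induction on $n$: fixing a majority tournament $T\subseteq\TT^{1/2}$, showing via the induction hypothesis and \Cref{lem:rainbow-dir-ham-path_n-cols} that every vertex of out- or in-degree at least $2$ in $T$ would have an exceptional neighbourhood collection, and then finishing by degree counting). Your structural derivations are correct as far as they go: the propagation arguments for $\arc{u_tv}\in T_c$ ($t\le s$) and $\arc{vu_t}\in T_c$ ($t\ge s+1$), the deductions $\arc{vu_s},\arc{u_{s+1}v}\in T_i$, $\arc{u_{t+1}u_t}\in T_i$ for $t\ne s$, and $\arc{u_{s+1}u_s}\in T_c$ for $c\in C$ all check out, and the $(r,n)=(2,3)$ analysis correctly recovers the exceptional configuration.

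However, the endgame for $r\ge 3$ is a genuine gap, and the fallback you sketch does not close it. In the reversed path $Q=v\rarc u_r\rarc\cdots\rarc u_1$ there are $r-2$ arcs $\arc{u_{t+1}u_t}$ ($t\ne s$) about which you know only that they lie in $T_i$; once $r\ge 4$ at least two of them compete for the single color $i$, so $Q$ need not admit a rainbow coloring. Your proposed remedy rests on the claim that the forward arcs $\arc{u_tu_{t+1}}$ ``lie in many of the other tournaments by pigeonhole on $|\TT|\ge 2n$,'' but pigeonhole only guarantees that \emph{one} of the two orientations of each pair lies in at least $n$ tournaments, and that orientation may well be the backward one $\arc{u_{t+1}u_t}$ (which already contains $i$); the forward arc could lie in $T_{c_t}$ alone. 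Even granting that each pair has a rich orientation, different indices $t$ may be rich in opposite directions, and a single directed path on $\{v,u_1,\dots,u_r\}$ cannot alternate between forward and backward traversal of the $u$-sequence without invoking non-consecutive arcs $\arc{u_tu_{t'}}$, $|t-t'|\ge 2$, about which you have derived nothing. So the case $r\ge 3$ requires a substantively new idea (further rotation/recoloring arguments, or the paper's inductive decomposition), not just the stated switch to forward arcs.
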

\begin{proof}
    Assume without loss of generality that \(\Gamma(\TT)=[2n]\) and \(i=1\). We use induction on \(n\). The base cases when \(n=2\) or \(3\) are clear. Let \(n\ge 4\) and assume the statement holds for smaller \(n\) and $\TT$ is the smallest counterexample to the statement. We call a collection of tournaments exceptional if one tournament is a directed triangle and all the others are directed triangles with the opposite orientation.
    Choose a tournament $T\subseteq \TT^{1/2}$.

    \begin{claim}\label{cl: exceptional}
    For any vertex $v\in V(\TT)$, if $d^+_T(v)\geq 2$, then the collection $\TT[N^+_T(v)]$ is exceptional.
    Similarly, if $d^-_T(v) \geq 2$, then the collection $\TT[N^-_T(v)]$ is exceptional.
    \end{claim}
    \begin{claimproof}
        Suppose that \(v\) has out-degree at least two and \(\TT[N_T^+(v)]\) is not exceptional. By the induction hypothesis, there exists a rainbow directed path \(P = v_1\rarc \cdots \rarc v_t \) with \(V(P) =  N_T^+(v)\) using one arc in \(T_1\).
        Since $\arc{vv_1}\in T\subseteq \TT^{1/2}$, there are at least $\frac{1}{2}\cdot 2n = n$ colors containing $\arc{vv_1}$. As at most \(d_T^+(v)-1 \le n-2\) colors are used in \(P\), we can choose a color to extend \(P\) to a rainbow path \( P'= v\rarc v_1 \rarc \cdots \rarc v_t\).
        
        If $d^{-}_T(v)=0$, then \(P'\) is the desired rainbow Hamilton path of $\TT$, a contradiction. Otherwise, letting $I \subseteq [2n]$ be the collection of colors not used by \(P'\), we have \(|I| \geq 2n - (n-1) \geq n+1\) colors which are not yet used. By applying \Cref{lem:rainbow-dir-ham-path_n-cols} to $\TT_I[N^-_T(v)]$, we obtain a (possibly empty) rainbow Hamilton path \(Q = u_1\rarc \cdots \rarc u_s\) on the vertex set $N^-_T(v)$. As at most \((d_T^+(v)-1)+(d_T^-(v)-1)+1=n-2\) colors are used in $P'\cup Q$, we can still choose a color \(j\in [2n]\) such that $\arc{u_s v} \in T_j$ and \(j\) is not used in \(P'\cup Q\). 
        This yields a desired rainbow Hamilton path $Q\rarc P'$ of $\TT$ using one arc from $T_1$, a contradiction. The similar argument works when \(d_T^-(v)\ge 2\) and \(\TT[N_T^-(v)]\) is not exceptional.
    \end{claimproof}
    
Now we use this claim to derive a contradiction. 
If $n \geq 8$, then there must be a vertex $v$ with $d^+_T(v)\geq 4$, whence $\TT[N^+_T(v)]$ is not exceptional, a contradiction to \Cref{cl: exceptional}.
If $4\leq n \leq 6$, then it is easy to check that there must be a vertex $v$ such that either $d^{+}(v)\notin \{1,3\}$ or $d^-(v) \notin \{1,3\}$, which yields a contradiction to \Cref{cl: exceptional}.

The only remaining case is when $n=7$, and $T$ is a $3$-regular tournament. Choose $v\in V(\TT)$. 
\Cref{cl: exceptional} implies that $\TT[N^+_T(v)]$ and $\TT[N^-_T(v)]$ are both exceptional.
Then, $T[N^+_T(v)]$ is a directed triangle $a\rarc b \rarc c \rarc a$ and moreover $T_1[N^+_T(v)]$ is the directed triangle $c\rarc b \rarc a \rarc c$.
Also, $T[N^-_T(v)]$ is a directed triangle $x\rarc y \rarc z \rarc x$ and moreover $T_1[N^-_T(v)]$ is the directed triangle $z\rarc y \rarc x \rarc z$. 
As $T$ is $3$-regular, we can assume that $\arc{ax} \in T$.
Then, as every arc in $T$ belongs to at least $n$ colors, we can find a rainbow direct Hamilton path $z\crar{1} y \rarc v \rarc b \rarc c \rarc a \rarc x$ by greedily choosing colors of all arcs other than the first one.
Again, this contradiction completes the proof.
\end{proof}

We next show that for any linearly many specified colors, we can find a rainbow Hamilton path using those colors. 
\begin{lemma}
\label{lem:embed-linear-fixed-color}
    Let $\TT$ be a collection of tournaments with $|V(\TT)| = n\geq 25$ and $|\TT| =m \geq 4n$. Let $B\subseteq \Gamma(\TT)$ with $|B| \le \frac{n}{25}$. 
    Let $u,v$ be vertices in $V(\TT)$ such that for each $w\in V(\TT)\setminus\{u,v\}$, the arc $\arc{uw}$ is in $T_i$ for at least two choices of $i\in B$ and the arc $\arc{wv}$ is in $T_j$ for at least two choices  of $j\in B$.
    Then, $\TT$ has an $(\Gamma(\TT),B)$-rainbow Hamilton path starting from $u$ and ending at $v$.
\end{lemma}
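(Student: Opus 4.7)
My plan is to construct the rainbow Hamilton path in two stages: first, reserve a vertex-disjoint arc for each color in $B$; second, thread these reserved arcs into a Hamilton path from $u$ to $v$, coloring the remaining arcs greedily from the large pool $\Gamma(\TT)\setminus B$ of spare colors.

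For Stage~1 (reservation), enumerate $B=\{b_1,\dots,b_k\}$ with $k=|B|\le n/25$. The hypothesis on $u$ gives
\[
\sum_{b\in B} d^+_{T_b}(u,V(\TT)\setminus\{u,v\}) = \sum_{w\ne u,v}\big|\{b\in B:\arc{uw}\in T_b\}\big|\ge 2(n-2),
\]
so averaging yields some $b_{\mathrm{in}}\in B$ with at least $2(n-2)/k\ge 50$ out-neighbors of $u$ in $V(\TT)\setminus\{u,v\}$. I pick such a $b_{\mathrm{in}}$ together with an out-neighbor $z_0$ of $u$ in $T_{b_{\mathrm{in}}}$, reserving the arc $\arc{uz_0}$ for color $b_{\mathrm{in}}$. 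Symmetrically, using the condition on $v$, I reserve an arc $\arc{y_0v}\in T_{b_{\mathrm{out}}}$ for some $b_{\mathrm{out}}\in B\setminus\{b_{\mathrm{in}}\}$ and some $y_0\in V(\TT)\setminus\{u,v,z_0\}$. For each remaining $b\in B\setminus\{b_{\mathrm{in}},b_{\mathrm{out}}\}$, I greedily select a reserved middle arc $e_b=\arc{y_bz_b}\in T_b$ whose endpoints avoid all previously used vertices; this succeeds because at most $2k+2\le 2n/25+2$ vertices are ever blocked while $T_b$ has $\binom{n}{2}$ arcs.

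For Stage~2 (threading and coloring), I build a Hamilton path from $u$ to $v$ that uses every reserved arc. Since $|\Gamma(\TT)\setminus B|\ge m-n/25\ge 99n/25$, the pigeonhole principle yields a tournament $T\subseteq\TT^{1/2}_{\Gamma(\TT)\setminus B}$, and each arc of $T$ lies in at least $(m-k)/2\ge 99n/50$ colors of $\Gamma(\TT)\setminus B$. I contract each reserved middle pair $\{y_b,z_b\}$ in $T$ into a super-vertex $\tilde e_b$, forming a digraph $\widetilde T$ whose arc $\arc{x\tilde e_b}$ is present iff $\arc{xy_b}\in T$, $\arc{\tilde e_bx}$ is present iff $\arc{z_bx}\in T$, and arcs between two super-vertices are defined analogously. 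I then seek a Hamilton path in $\widetilde T$ from $z_0$ to $y_0$; unfolding it and prepending $\arc{uz_0}$ and appending $\arc{y_0v}$ produces the desired Hamilton path. Reserved arcs are colored by their assigned $B$-colors, and the remaining $n-1-k$ arcs are colored greedily from $\Gamma(\TT)\setminus B$; this succeeds because every such arc has at least $99n/50$ available colors of $\Gamma(\TT)\setminus B$ while fewer than $n$ colors are used in total along the path.

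The main obstacle is that $\widetilde T$ is not necessarily a tournament: a pair $(x,\tilde e_b)$ may have arcs in both directions, one direction, or neither, depending on whether $y_b\to x\to z_b$ holds in $T$. Vertices $x$ lying strictly between $y_b$ and $z_b$ in $T$ become isolated from $\tilde e_b$, and too many such isolated vertices could obstruct a Hamilton path. I plan to handle this by choosing the reserved arcs adaptively---for instance, selecting each $(y_b,z_b)$ to be a consecutive pair in a median order of $T$, which guarantees that no vertex lies strictly between $y_b$ and $z_b$. Alternatively, the abundance of spare arcs (by choosing $e_b$ within $T_b$) together with spare colors allows local modifications to swap an isolated vertex out of any obstructive position before threading.
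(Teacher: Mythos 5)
Your Stage~2 is where the argument breaks down, and it is not a repairable detail but the entire content of the lemma. You correctly identify that the contracted digraph $\widetilde T$ need not be a tournament, but the fixes you sketch do not close the gap. The median-order idea fails at the outset: a reserved arc $e_b=\arc{y_bz_b}$ must lie in $T_b$, whereas consecutive pairs in a median order of the majority tournament $T$ only guarantee $\arc{y_bz_b}\in T$; the single tournament $T_b$ may orient every such consecutive pair backwards, so you cannot in general reserve a $T_b$-arc that is simultaneously a "gap-free" pair for $T$. Even setting that aside, after contracting up to $n/25$ pairs you need a Hamilton path in $\widetilde T$ \emph{between two prescribed endpoints} $z_0$ and $y_0$; this is false in general even for genuine tournaments (a transitive tournament has a unique Hamilton path), and no structural hypothesis on $\widetilde T$ has been established that would force it. The closing remark that "local modifications" can swap obstructive vertices out is exactly the kind of argument that needs to be carried out explicitly --- with $|B|$ as large as $n/25$ the interactions between super-vertices (two super-vertices can be mutually non-adjacent in $\widetilde T$) are not controlled by any estimate you give. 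Stage~1 and the greedy coloring of the non-reserved arcs are fine (the count $2(n-2)/k\ge 46$ rather than $50$ is immaterial), but without a proof of the threading step the lemma is not proved.

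For comparison, the paper avoids contraction entirely: it applies the $\Hpart(24,\tfrac16)$-partition to split $V(\TT)\setminus\{u,v\}$ into $r\ge n/25\ge|B|$ blocks of size between $4$ and $24$, assigns each color of $B$ its own block, and invokes \Cref{lem:embed-one-fixed-color} inside each block to produce a Hamilton path of that block forced to use the assigned color; the intermediate vertices $w_i$ of the partition then let these block-paths be concatenated with greedily colored connecting arcs. The forced appearance of each color of $B$ is thus delegated to a small, local instance of a separately proved lemma, rather than to a global threading of reserved arcs. If you want to salvage your approach, you would need a Hamilton-connectivity statement for the contracted digraph strong enough to handle prescribed endpoints and missing adjacencies, which is essentially as hard as the original problem.
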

\begin{proof}
    Assume without loss of generality that \(\Gamma(\TT)=[4n]\). Let $V \defeq V(\TT)\setminus\{u,v\}$.
    Choose \(T\subseteq \TT^{1/2}[V]\), and apply \Cref{lem:H-partition} to find an \(\Hpart(24,\frac{1}{6})\)-partition $(W_1, \dots, W_r, w_1, \dots, w_{r-1})$ of \(T\). Note that we have \(n/25\le r\le n\) and \(|W_i|\geq 4\) for each \(i\). Assume without loss of generality that $B\subseteq [r]$.
    Partition the color set \([r+1,4n]\) into \(C_1,\dots, C_r\) where \(|C_i|\ge 2|W_i|\) for each \(i\).
    
    For $j\in \{1,r\}$, use \Cref{lem:embed-one-fixed-color} to obtain a $C_i$-rainbow Hamilton path $P_i$ in $W_i$ starting from a vertex $u_j$ and ending at a vertex $v_j$. We choose two colors, say $1$ and $r$, so that $\arc{uu_1}\in T_1$ and $\arc{v_rv} \in T_r$. 
    
    For each $i=2,\dots, r-1$, we apply \Cref{lem:embed-one-fixed-color} to find a \((C_i\cup \{i\},i)\)-rainbow Hamilton path \(P_i\) of \(\TT[W_i]\) which starts from a vertex $u_i$ and ends at a vertex $v_i$. This uses all the colors in \(B=[r]\). We have used at most \(n\) colors, so for each arc in \(T\) there are at least \(\frac{1}{2} \cdot 4n-n=n\) unused colors. Thus we can greedily choose distinct colors for \(\{\arc{v_iw_i},\, \arc{w_i u_{i+1}}:1\le i\le r\}\) to complete a rainbow Hamilton path \(u \rightarrow P_1 \rightarrow w_1 \rightarrow P_2 \rightarrow \dots \rightarrow w_r \rightarrow P_{r+1} \rightarrow v\) of \(\TT\), which uses all the colors in \(B\) as desired.
\end{proof}

\subsection{Main lemma}
In this subsection, we prove the following lemma. It is straightforward to derive \Cref{thm:rainbow-dir-ham-path} from this lemma.
\begin{lemma}
\label{lem:rainbowDHP}
    Let \(0<\frac{1}{n}\ll\mu\ll \gamma, \alpha\le 1\). Let \(\TT\) be a collection of tournaments with \(|V(\TT)| = n\) and \(|\TT| = n-1\). 
    Let \(T\subseteq \TT^{\alpha}\) be a tournament. Let \(w_0,w_r\in W\), and \((W_1,\dots,W_r,w_1,\dots,w_{r-1})\) be an \(\Hpart(\mu n,\gamma)\)-partition of \(T\setminus\{w_0,w_r\}\).
    Suppose \(w_0\outdir W_1\) and \(W_r\outdir w_r\) in~$T$.
    Then, there is a rainbow Hamilton path in \(\TT\) from \(w_0\) to \(w_r\).
\end{lemma}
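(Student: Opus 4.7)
\begin{proofsketch}
The plan is to execute the four-step strategy outlined in \Cref{sec: proofsketch}, using the $\Hpart$-partition supplied in the hypothesis. Introduce auxiliary constants $1/n \ll \mu \ll \gamma' \ll \beta \ll \gamma, \alpha$. The Hamilton path I will build takes the form
\[
w_0 \rarc u_1 \rarc P_1 \rarc v_1 \rarc w_1 \rarc u_2 \rarc P_2 \rarc v_2 \rarc \cdots \rarc w_{r-1} \rarc u_r \rarc P_r \rarc v_r \rarc w_r,
\]
where each $P_i$ is a Hamilton path of $T[W_i]$ from $u_i$ to $v_i$; the task is to simultaneously select the $P_i$'s along with a rainbow coloring of all $n-1$ arcs.

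First I reserve a set $D \subseteq \Gamma(\TT)$ of at most $\mu^2 n$ colors earmarked for the $2r$ intermediate arcs, with the property that for every eventual choice of endpoints $u_i, v_i$ the intermediate arcs admit a $D$-rainbow coloring; such a $D$ exists by a greedy argument exploiting the large in- and out-degrees of each $w_i$ in $T \subseteq \TT^\alpha$. Then I pick a threshold $t$ with $\sum_{i \in [t]}(|W_i|-1) \in [\beta n, (\beta+\mu) n]$, fix arbitrary Hamilton paths $P_1, \dots, P_t$ of $T[W_1], \dots, T[W_t]$, and apply \Cref{lem:absorber} to $S := P_1 \cup \cdots \cup P_t$ regarded as a copy of itself in $\TT^\alpha$. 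This produces disjoint color sets $A, C \subseteq \Gamma(\TT) \setminus D$ with $|A| = e(S) - \gamma'(n-1)$ and $|C| \ge 10\beta(n-1)$ such that $S$ admits an $(A \cup C')$-rainbow coloring for every $C' \subseteq C$ of size $\gamma'(n-1)$. Set $B := \Gamma(\TT) \setminus (A \cup C \cup D)$.

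Next I fix an absorber index $\tau \in [t+1, r-1]$. For each $i \in [t+1, r] \setminus \{\tau\}$ in turn, I embed a rainbow Hamilton path $P_i$ of $T[W_i]$ using currently-unused colors drawn preferentially from $B$: \Cref{lem:rainbow-dir-ham-path_n-cols}, applied to $\TT_{C_i}[W_i]$ for a carefully selected $|W_i|$-sized pool $C_i \subseteq B \cup C$, produces such a path. Bookkeeping shows that after all iterations the residue $B^* \subseteq B$ of unused colors satisfies $|B^*| \le r \le 1/(\gamma \mu)$. I then color the $2r$ intermediate arcs greedily from $D$, leaving a residue $D^* \subseteq D$ with $|D^*| \le \mu^2 n$; write $C^* \subseteq C$ for the $C$-colors still unused.

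Finally I apply \Cref{lem:embed-linear-fixed-color} to the collection $\TT_I[W_\tau \cup \{w_{\tau-1}, w_\tau\}]$, where $I$ is chosen with $B^* \cup D^* \subseteq I \subseteq B^* \cup D^* \cup C^*$ and $|I| \ge 4(|W_\tau|+2)$, to obtain an $(I, B^* \cup D^*)$-rainbow Hamilton path $P_\tau$ from $w_{\tau-1}$ to $w_\tau$; this uses up every remaining color of $B$ and $D$. Calibrating $|I \cap C^*|$ so that exactly $\gamma'(n-1)$ colors of $C$ remain unused lets me invoke the absorber guarantee from step~2 to color $S$ in $(A \cup C')$-rainbow fashion, completing the Hamilton path from $w_0$ to $w_r$. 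The hard part will be the tight bookkeeping that aligns color counts with arc counts at every stage: one must ensure $|W_\tau| \gtrsim |B^* \cup D^*|$ so that the absorber at $W_\tau$ has enough room, verify the two-colors-in-$B^*\cup D^*$ hypothesis of \Cref{lem:embed-linear-fixed-color} on the endpoints $w_{\tau-1}, w_\tau$ (enlarging $I$ by suitable $C^*$-colors if needed), and arrange that the final $C$-pool has exactly the right size to invoke \Cref{lem:absorber}. All of these inequalities follow from the hierarchy $1/n \ll \mu \ll \gamma' \ll \beta \ll \gamma, \alpha$, using in particular $|W_\tau| \ge \gamma\mu n \gg 1/(\gamma\mu) + \mu^2 n$.
\end{proofsketch}
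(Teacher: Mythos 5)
Your proposal follows essentially the same four-step strategy as the paper's own proof --- reserve a color set $D$ for the intermediate arcs, build the color absorber of \Cref{lem:absorber} on the first $t$ blocks, exhaust $B$ on the middle blocks via \Cref{lem:rainbow-dir-ham-path_n-cols}, absorb the leftover colors $B^*\cup D^*$ inside $W_\tau$ via \Cref{lem:embed-linear-fixed-color}, and finish with the absorber --- and your bookkeeping of the color counts matches the paper's. One small correction: the two-available-colors hypothesis of \Cref{lem:embed-linear-fixed-color} at $w_{\tau-1},w_\tau$ cannot be met by ``enlarging $I$ by suitable $C^*$-colors'' (that hypothesis concerns the designated set $B^*\cup D^*$, not the ambient color set); the right fix is to reserve $D$ so that \emph{every} arc between a $w_i$ and its neighboring blocks has more than $2r$ available colors in $D$ --- the paper does this with a random choice and a Chernoff bound --- so that at least two such colors survive in $D^*$ after the $2(r-1)$ other intermediate arcs are colored.
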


We first show that \Cref{lem:rainbowDHP} implies \Cref{thm:rainbow-dir-ham-path}.

\begin{proof}[Proof of \Cref{thm:rainbow-dir-ham-path}]
Let \(0<\frac{1}{n}\ll\mu\ll \gamma\le 1/6\). Fix a tournament $T\subseteq  \TT^{1/2}$. By \Cref{lem:H-partition}, $T$ has an $\Hpart(\mu n, \gamma)$-partition $(W_0,\dots, W_{r+1}, w_0,\dots, w_r)$ for some $r\in \mathbb{N}$. Choose a Hamilton path $P_0$ of $T[W_0]$ and a Hamilton path $P_{r+1}$ of $T[W_{r+1}]$. Denote the last vertex of $P_0$ by $u$ and the first vertex of $P_{r+1}$ by $v$. Since the arcs of $P_0$ and $P_{r+1}$ and the arcs $\arc{uw_0}$ and $\arc{w_rv}$ lie in at least $(n-1)/2> |E(P_0)|+|E(P_{r+1})|+2$ different colors, we can greedily choose colors of the arcs in $E(P_0)\cup E(P_{r+1})\cup \{\arc{uw_0},\, \arc{w_rv}\}$ in a rainbow way.
Let $C\subseteq \Gamma(\TT)$ be the set of all the remaining colors, then $T$ is still a subtournament of $\TT_C^{1/4}$. 
By considering the collection $\TT_C[V(\TT)\setminus (W_0\cup W_{r+1}\cup \{w_0,w_r\})]$ 
together with an $\Hpart(\mu n, \gamma)$-partition \((W_1,\dots, W_r,w_1,\dots, w_{r-1}\)) of $T\setminus (W_0\cup W_{r+1}\cup \{w_0,w_r\})$, Lemma~\ref{lem:rainbowDHP} applied with $\alpha=1/4$ yields a $C$-rainbow Hamilton path from $w_0$ to $w_r$. This together with the precolored paths $P_0$ and $P_{r+1}$ and the arcs $\arc{uw_0}$ and $\arc{w_rv}$ yields a rainbow Hamilton path in~$\TT$.
\end{proof}

\begin{proof}[Proof of \Cref{lem:rainbowDHP}]
Note that an $\Hpart(\mu n, \gamma)$-partition is also an $\Hpart(\mu n, \gamma')$-partition for all $\gamma' < \gamma$. Hence, it is enough to prove the lemma further assuming $\mu\ll \gamma \ll \alpha$. We assume this in the rest of the proof.
We further choose a number $\beta$ satisfying $0< 1/n\ll \mu \ll \gamma \ll \beta \ll \alpha < 1$. We take a few steps to finish the proof, and the steps loosely follow the proof sketch given in \Cref{sec:preliminaries}.

\paragraph{Step 1. Reserve a color set $D$ for the intermediate arcs in the $\Hpart$-partition.}
We start with setting aside a set $D$ of colors which we will later use for the arcs incident with the vertices $w_0,w_1,\ldots,w_{r-1},w_r$.
For each $i\in [r]$, let $\calA^-_i =  \{C_{\TT}(\arc{vw_i}) : v\in W_i\}$ be the multi-collection of the sets of available colors for the arcs between the vertices in $W_i$ and $w_i$, and let $\calA^{+}_i = \{C_{\TT}(\arc{w_{i-1}v}): v\in W_i\}$ be the multi-collection of the sets of available colors for the arcs between $w_{i-1}$ and the vertices in $W_i$. Note that each set $A$ in $\calA^-_i\cup \calA^+_i$ has size at least $\alpha n$.
For each element in $\Gamma(\TT)$, we include it in the set $D$ independently at random with probability $\frac{ 20\mu^{-2} r \log n}{n}$. Then, the standard Chernoff bound yields that both of the followings simultaneously hold with a positive probability.
\begin{proplist}
    \item\label{eq: SDR1} $|D|\leq 100 \mu^{-2} r\log n < \mu^2 n$. 
    \item\label{eq: SDR2} For each $i\in [r-1]$ and each set $A \in \calA^-_i\cup \calA^+_i$, we have $|D\cap A|  > 2r$. 
\end{proplist}
Let us fix one such set $D$.

\paragraph{Step 2. Setting up a color absorber.} 
Let $t\leq r-2$ be an index such that 
$$\beta n\leq \sum_{i\in [t]} (|W_i|-1) \leq (\beta+ \mu) n.$$
Such a number $t$ exists as $|W_i|\leq \mu n$ and $\mu \ll \beta$.
For each $i\in [t]$, take a spanning path $P_i$ in $T[W_i]$. 
Let $\tau$ be an arbitrary element in $[r]$ with $t<\tau < r$. 
We partition $[r]$ into three sets $L_1$, $L_2$, $L_3$ as follows:
\[
    L_1 \defeq [t], \quad
    L_2 \defeq [r]\setminus (L_1\cup \{\tau\}), \quad
    L_3 \defeq \{\tau\}.
\]
Let $Q_1$ be the union of the paths $\bigcup_{i\in L_1} P_i$. 
Every arc $e$ of $T$ satisfies $|C(e)\setminus D| \geq \alpha n - \mu^2 n \geq \alpha n/2$. 
So we have $Q_1\subseteq T\subseteq \TT_{\Gamma(V(\TT))\setminus D}^{\alpha/2}$.
Hence, \Cref{lem:absorber} with $\alpha /2$ playing the role of $\alpha$ ensures a partition $A\cup B\cup C$ of $\Gamma(V(\TT))\setminus D$ satisfying the following.
\begin{proplist}
    \item $\beta n - \gamma n \leq |A| = e(Q_1) - \gamma n\leq  \beta n - \frac{1}{2} \gamma n$.\label{eq: size of A}
    \item $|C| \geq 10 \beta n$.\label{eq: size of B}
    \item For any subset $C'\subseteq C$ of size $e(Q_1) - |A|$, there is a rainbow coloring of $Q_1$ in $\TT$ using colors in~$A\cup C'$.\label{eq: color absorbing}
\end{proplist}
By \ref{eq: SDR1}, \ref{eq: size of A}, \ref{eq: size of B}, and the fact that $|A|+|B|+|C|+|D|=n-1$, we have 
\begin{align}
  |B|+|C| &=  n-1 - e(Q_1) + \gamma n -|D| \geq n - e(Q_1) + \frac{1}{2}\gamma n, \label{eq:lower bound on B and C}    
  \\|B| &\leq n- e(Q_1) + \gamma n -|C| \leq n-e(Q_1) - 9\beta n. \label{eq:upper bound on B}
\end{align}

\paragraph{Step 3. Use most of the colors in $B$ and color most intermediate arcs.} 
Next, we will choose a Hamilton path in $W_i$ for each $i\in L_2$ and choose colors for the arcs in those paths. While doing that, we aim to exhaust most of the colors in $B$ while using some additional colors in $C$. 
By \eqref{eq:lower bound on B and C}, \eqref{eq:upper bound on B}, the fact $|W_{\tau}|\leq \mu n$ and the fact that $e(Q_1)\geq \beta n$, we know that $M: = \sum_{i\in L_2} |W_i|$ satisfies 
  \begin{align*}
    (|B| + |C|) - \frac{1}{2}\gamma n
    \ge n - e(Q_1)
    \ge M
    \ge n - e(Q_1) - \mu n - (r+1+t)
    \ge n -e(Q_1) - \beta n
    \ge |B|.
  \end{align*} 
Using this, we choose an arbitrary subset $C_1\subseteq C$ with $|B\cup C_1|=M$. Furthermore, the set $C\setminus C_1$ satisfies
$$|C\setminus C_1| = |B| + |C| - M \geq \frac{1}{2}\gamma n.$$

Now, we partition the set $B\cup C_1$ into 
$\{B_i: i\in L_2\}$ with $|B_i|=|W_i|$ for each $i\in L_2$.
For each $i\in L_2$, we apply \Cref{lem:rainbow-dir-ham-path_n-cols} to the collection $\TT_{B_i}[W_i]$. This yields a $B_i$-rainbow Hamilton path in $\TT[W_i]$ using all colors in $B_i$ except exactly one color $b_i$, say. Let $Q_2$ be the union of such paths.
Let $B' = \{b_i :i\in L_2\}$ be the set of at most $r$ unused colors.
Let $B^* \defeq B\cap B'$ and $C^* \defeq (C\setminus C_1)\cup (C\cap B')$. Then, $B^*$ and $C^*$ are the set of the unused colors in $B$ and $C$, respectively. Moreover, 
\begin{equation} \label{eq:size of B and D star}
|B^*| \leq r \;\;\;\;\; \text{and} \;\;\;\;\; |C^*| \geq \frac{1}{2} \gamma n.
\end{equation}

We currently have a union of paths $Q_1\cup Q_2$ where the arcs of $Q_1$ are not colored, but the arcs in $Q_2$ have received pairwise distinct colors.
Now, for each $i\in [r]\setminus\{\tau\}$, we have a path-component of $Q_1\cup Q_2$ within $W_i$, which starts from $u_i$ and ends at $v_i$, say.
Now we will connect the paths using the vertices $w_i$. 
Consider the sets $E$ of arcs $\{\arc{w_{i-1} u_i} : i\in [r]\setminus \{\tau\}\}\cup \{\arc{v_i w_i}: i\in [r]\setminus \{\tau\}\}$.
For each arc $e\in E$, we greedily choose a color in $C(e)\cap D$ and color it with the color, ensuring the arcs in $E$ receive pairwise distinct colors. By \ref{eq: SDR2}, we can do this for all arcs of $E$. Let $D^*$ be the set of unused remaining colors in~$D$.

\paragraph{Step 4. Absorption of $B^*\cup D^*$ using the color absorber.} 

Let $S^* = B^*\cup C^*\cup D^*$. 
We now consider the collection ${\TT_{S^*}[W_{\tau}\cup \{w_{\tau},w_{\tau+1}\}]}$.
By \ref{eq: SDR1} and \eqref{eq:size of B and D star} , we have $|B^*\cup D^*|\leq r + \mu^2 n < \frac{1}{25}|S^*|$. 
Also by \ref{eq: SDR2}, at least $2r-2(r-1)\geq 2$ choices of $C(\arc{w_{\tau}w})\cap D$ and $C(\arc{w w_{\tau+1}})$ are available for each $w\in W_{\tau}$.
Hence we can apply Lemma~\ref{lem:embed-linear-fixed-color} on $\TT_{S^*}[W_{\tau}\cup \{w_{\tau},w_{\tau+1}\}]$ with $B^*\cup D^*$ playing the role of $B$ to obtain a $(S^*, B^*\cup D^*)$-rainbow path from $w_{\tau}$ to $w_{\tau+1}$.
This, together with $Q_1\cup Q_2$, provides a partially colored rainbow Hamilton path $P^{\text{final}}$ in $\TT$, which uses all the colors outside $C$. 
Let $C'\subseteq C$ be the set of colors in $C$ which are not used in $P^{\text{final}}$. By using \ref{eq: color absorbing}, we can color the remaining uncolored arcs in $Q_1$ exactly using the colors in $A\cup C'$. This provides a rainbow Hamilton path in $\TT$ starting from $w_0$ to $w_r$. 
\end{proof}

\section{Transversal Hamilton cycles}
\label{sec:rainbow-dir-ham-cycles}

The definition of strong connectedness says that for any pair of vertices, there exists a path connecting those vertices. The following lemma guarantees a similar property in the rainbow setting.

\begin{lemma} \label{lem:rainbow-strongly-connected-path}
    Let $\TT$ be a collection of strongly connected tournaments with \(|\TT|\geq |V(\TT)|-1\geq 1\). Then, for all distinct $x, y\in V(\TT)$, there exists a rainbow path from $x$ to $y$.
\end{lemma}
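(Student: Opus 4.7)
The plan is to take a longest rainbow directed path $P = v_0 \to v_1 \to \cdots \to v_r$ starting at $v_0 = x$ in $\TT$ and argue that $y$ must lie on $V(P)$; the sub-path of $P$ from $x$ to $y$ then provides the desired rainbow path. Write $c_j \defeq \phi(\arc{v_j v_{j+1}})$ for the color of each arc of $P$, and set $I \defeq \{c_0,\ldots,c_{r-1}\}$ and $J \defeq \Gamma(\TT) \setminus I$. Toward a contradiction, assume no rainbow $x$-to-$y$ path exists; then $y \notin V(P)$.

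The crucial forcing observation is that for every $i \in \{0,\ldots,r\}$ and every $c \in \Gamma(\TT) \setminus \{c_0,\ldots,c_{i-1}\}$, the arc $\arc{v_i y}$ cannot lie in $T_c$---otherwise appending it to the rainbow prefix $v_0 \cdots v_i$ would yield a rainbow $x$-to-$y$ path. Hence $\arc{y v_i} \in E(T_c)$ for every such $(i,c)$. In particular, $y$ dominates all of $V(P)$ in every $T_c$ with $c \in J$, and the same argument applied to any vertex $v$ not rainbow-reachable from $x$ shows that $v$ dominates $V(P)$ in every such $T_c$. Letting $R$ denote the set of vertices rainbow-reachable from $x$ and setting $D \defeq V(\TT) \setminus R$, we have $V(P) \subseteq R$, $y \in D$, and no arc of $T_c$ (for any $c \in J$) goes from $V(P)$ into $D$.

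If $r = n-2$, then $J = \{c^*\}$ is a singleton and $V(\TT) = V(P) \cup \{y\}$, so $y$ has in-degree zero in $T_{c^*}$---contradicting strong connectedness. Otherwise $r \leq n-3$, and the set $U \defeq V(\TT) \setminus V(P) \setminus \{y\}$ is nonempty. Pick any $c^* \in J$; since $T_{c^*}$ is strongly connected, there is a $T_{c^*}$-directed path from $x$ to $y$, and because no arc of $T_{c^*}$ enters $D$ from $V(P)$, this path must first exit $V(P)$ via an arc $\arc{v_k w}$ with $v_k \in V(P)$ and $w \in R \cap U$. The prefix $v_0 \to \cdots \to v_k \to w$ is then a rainbow path $\pi$ from $x$ to $w$ with color set $\{c_0,\ldots,c_{k-1},c^*\}$; if some color $c' \in \Gamma(\TT) \setminus (\{c_0,\ldots,c_{k-1}\} \cup \{c^*\})$ satisfies $\arc{w y} \in E(T_{c'})$, then appending this arc to $\pi$ produces the desired rainbow $x$-to-$y$ path.

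The main obstacle is to guarantee such a color $c'$. The plan is to let $c^* \in J$ (and hence $w$ and $k$) vary: if no choice yields a valid $c'$, then the resulting constraints, combined with the fact that every $T_c$ with $c \in J$ has all in-neighbors of $y$ lying entirely in $U$, force $U$ to contain a growing chain of vertices each dominating $V(P)$ in some tournament of $J$. Since $U$ is finite, this iteration eventually exhausts the pool of valid in-neighbors for some $T_c$ with $c \in J$, contradicting its strong connectedness just as in the $r = n-2$ case.
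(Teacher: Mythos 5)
Your forcing observation is correct, and the case $r=n-2$ goes through (though note $J$ need not be a singleton since $|\TT|\ge n-1$; you only need one $c^*\in J$). The problem is the case $r\le n-3$: the final paragraph is a plan, not a proof, and the gap it leaves is genuine. Two things go wrong. First, the exit vertex $w\in R\cap U$ is rainbow-reachable, but the only rainbow path to $w$ your construction supplies has color set $\{c_0,\dots,c_{k-1},c^*\}$; when $k$ is close to $r$ this occupies almost every color, so nothing guarantees a spare $c'$ with $\arc{wy}\in E(T_{c'})$, and nothing excludes the scenario in which every rainbow path from $x$ to every candidate $w$ is forced to use every color of $C(\arc{wy})$. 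Second, your structural information (``$v$ dominates $V(P)$ in every $T_c$ with $c\in J$'') applies only to vertices of $D$ and only to arcs between $D$ and $V(P)$; it says nothing about arcs from $R\setminus V(P)$ into $D$, which is precisely where a strong-connectivity contradiction would have to live, since in $T_{c^*}$ the set $D$ may well receive arcs from $R\setminus V(P)$. The ``growing chain of vertices each dominating $V(P)$ in some tournament of $J$'' is never defined: vertices of $R\cap U$ are reachable, so the forcing observation does not apply to them and there is no stated reason they should dominate $V(P)$, nor is it explained why exhausting $U$ contradicts strong connectivity.

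The paper sidesteps all of this color bookkeeping with a layered reachability argument: set $L_0=\{x\}$ and $L_i=L_{i-1}\cup\{v: \arc{uv}\in E(T_i)\text{ for some }u\in L_{i-1}\}$, so that layer $i$ is entered using only color $i$. Strong connectivity of $T_i$ forces $|L_i|>|L_{i-1}|$ unless $L_{i-1}=V(\TT)$, hence $L_{n-1}=V(\TT)$, and tracing $y$ back through the layers yields a rainbow path whose colors form an increasing sequence. If you want to rescue a longest-path argument, you need some analogous mechanism that keeps spare colors available at the moment you extend; your current setup does not provide one.
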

    
\begin{proof}
    Let \(x,y\in V(\TT)\) be two arbitrary distinct vertices.
    Let $|V(\TT)|=n$ and $\TT \supseteq \{T_1, \dots, T_{n-1}\}$. Let $L_0 \defeq \{x\}$, and inductively define for each $i\in [n-1]$
    \[
    L_i \defeq L_{i-1}\cup \{v\in V(\TT) : \text{there exists }u\in L_{i-1} \text{ such that } \arc{uv}\in E(T_i)\}.
    \]
    Then for each \(i\in [n-1]\), we have either $|L_{i-1}| < |L_i|$ or $|L_{i-1}| = n$. This is because if $|L_{i-1}| < n$ and $|L_{i-1}| = |L_i|$, it means $(V(\TT)\backslash L_i) \outdir L_i$ in the tournament $T_i$. This contradicts the assumption that $T_i$ is strongly connected. Thus, there is $r\in [n-1]$ such that $|L_r| = n$, i.e., \(L_r=V(\TT)\). Consequently, we can take a minimum $r\in [n-1]$ such that $y\in L_r$. Then, by definition of the sets $L_i$, there exists a rainbow  path from $x$ to $y$ with using some colors from~$[r]$. 
\end{proof}

Now we prove  \Cref{thm:rainbow-dir-ham-cycle}.
\begin{proof}[Proof of \Cref{thm:rainbow-dir-ham-cycle}]
    Let $\TT = \{T_1, \dots, T_n\}$ and $|V(\TT)|=n$. We may assume $T_1, \dots, T_{n-1}$ are strongly connected tournaments. 
    Assume that there are no rainbow Hamilton cycles.
    Let $\mu$ be a small constant so that we have  $0< 1/n \ll \mu \ll 1$.
    
    Fix a tournament $T\subseteq \TT^{1/2}$. There is an $\Hpart(\mu n, 1/6)$-partition $(W_0, W_1, \dots, W_{r+1}, w_0, \dots, w_{r})$ of $T$ by \Cref{lem:H-partition}. 
    Consider a Hamilton path $P_0$ of $T[W_0]$ from a vertex $x$ to a vertex $x'$ and another Hamilton path $P_{r+1}$ of $T[W_{r+1}]$ from a vertex $y'$ to a vertex $y$. We fix these vertices \(x\), \(x'\), \(y\), \(y'\). 
    
    \begin{claim}\label{eq: closing with path}
        For each $i\in [n]$, we have $\arc{xy}\in T_i$.
        Also, there are at most $10 \mu n$ internally disjoint paths of length three from $y$ to $x$ such that each path is rainbow, while different paths may have arcs with the same color.
    \end{claim}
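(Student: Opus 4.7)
The plan is to prove both parts by assuming the desired conclusion fails and constructing a rainbow Hamilton cycle via \Cref{lem:rainbowDHP}, contradicting the standing hypothesis that no such cycle exists.

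For part (1), suppose $\arc{yx}\in T_i$ for some $i\in[n]$, and aim to find a rainbow Hamilton path from $x$ to $y$ using only colors in $[n]\setminus\{i\}$; closing with $\arc{yx}$ of color $i$ then yields a rainbow Hamilton cycle. First, we greedily color the $|W_0|+|W_{r+1}|$ arcs in $E(P_0)\cup E(P_{r+1})\cup\{\arc{x'w_0},\arc{w_r y'}\}$ (the two bridging arcs lie in $T$ by the $\Hpart$-structure) with pairwise distinct colors from $[n]\setminus\{i\}$; this is possible since each such arc belongs to at least $n/2$ tournaments of $\TT$ while only $O(\mu n)$ colors are spent. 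Let $D$ denote the set of unused colors; a direct count gives $|D|=|V(\TT)\setminus(W_0\cup W_{r+1})|-1$. Then apply \Cref{lem:rainbowDHP} to the collection $\TT_D[V(\TT)\setminus(W_0\cup W_{r+1})]$ with the partition $(W_1,\dots,W_r,w_1,\dots,w_{r-1})$ and designated endpoints $w_0, w_r$: the density requirement $T\subseteq\TT_D^{\alpha}$ holds for some $\alpha\ge 1/3$, and $w_0\outdir W_1$, $W_r\outdir w_r$ follow from the original $\Hpart$-partition. The resulting $D$-rainbow Hamilton path from $w_0$ to $w_r$, together with $P_0$, $P_{r+1}$, and the two bridging arcs, assembles into the desired rainbow Hamilton path from $x$ to $y$.

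For part (2), suppose for contradiction that more than $10\mu n$ internally disjoint rainbow length-three paths from $y$ to $x$ exist. Since $|W_i|\ge\mu n/6$ for each $i$, we have $r\le 6/\mu$, whence $|W_0\cup W_{r+1}\cup\{w_0,\dots,w_r\}|\le 2\mu n+O(1/\mu)\le 3\mu n$ for sufficiently large $n$. Because the paths are internally disjoint, at most $3\mu n$ of them use an internal vertex from this ``forbidden'' set, so at least one such path $P^*:y\to a\to b\to x$ has both $a$ and $b$ outside it; let $c_1,c_2,c_3$ denote its three distinct colors. We now repeat the scheme of part (1) with $\{c_1,c_2,c_3\}$ in place of $\{i\}$ and with $V(\TT)\setminus(W_0\cup W_{r+1}\cup\{a,b\})$ as the working vertex set. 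The color and vertex counts again match, and since $\{a,b\}$ is removed from at most one part $W_j$ of size at least $\mu n/6$, the restricted partition is still an $\Hpart(\mu n, 1/10)$-partition; hence \Cref{lem:rainbowDHP} yields a rainbow Hamilton path on the reduced vertex set. Concatenating this with $P_0$, $P_{r+1}$, the bridging arcs, and finally $P^*$ closes into a rainbow Hamilton cycle, the desired contradiction.

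The main technical task is bookkeeping: ensuring that after trimming the forbidden vertices and spending $|W_0|+|W_{r+1}|$ colors on the precolored structure, the residual collection has exactly one fewer tournament than the remaining vertex count and $T$ retains density at least $1/3$ in $\TT_D$; both are routine given the hierarchy $0<1/n\ll\mu\ll 1$ chosen at the outset, so no idea beyond \Cref{lem:rainbowDHP} is needed.
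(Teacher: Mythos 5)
Your proposal is correct and follows essentially the same route as the paper: in both cases you reserve the closing structure (the arc $\arc{yx}$ or a length-three rainbow path avoiding $W_0\cup W_{r+1}\cup\{w_0,\dots,w_r\}$), delete its colors and internal vertices, greedily precolor $P_0$, $P_{r+1}$ and the bridging arcs, and invoke \Cref{lem:rainbowDHP} on the residual $\Hpart(\mu n,1/10)$-partition to assemble a rainbow Hamilton cycle, contradicting the standing assumption. The paper merely packages the two cases into a single argument with a set $I$ of excluded colors and a set $X$ of excluded vertices, but the counting and the application of the lemma are the same as yours.
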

    \begin{claimproof}
        We first define a color set $I\subseteq [n]$ and a vertex set $X$ and a path $P$ in each of the following two cases.
        \begin{Cases}
            \item If $\arc{yx}\in T_i$ for some $i\in [n]$, we let $I=\{i\}$ and $X=\emptyset$ and $P=\arc{yx}$.

            \item If Case~1 does not happen and there are more than $10\mu n$ internally disjoint rainbow paths of length three from $y$ to $x$, then we  choose one such path $P=y \rightarrow u \rightarrow v \rightarrow x$ where $u,v\notin \{ w_1,\dots, w_r\}\cup W_0\cup W_{r+1}$. Indeed, as $1/n\ll \mu$, such a path exists since $|\{ w_1,\dots, w_r\}\cup W_0\cup W_{r+1}| \le 6/\mu + 2\mu n < 10 \mu n$. In this case, we let $I$ to be the set of three colors in the rainbow path $P$, and $X= \{u,v\}$.
        \end{Cases}
        
       In either case, we let $W'_i = W_i\setminus X$ for each $i\in \{0\}\cup [r+1]$ and $\TT'= \TT_{[n]\setminus I}[V(\TT)\setminus X]$ and $T'=T\setminus X$. 
       Then, $(W'_0, W'_1, \dots,W'_r, W'_{r+1}, w_0, \dots, w_{r})$ is an $\Hpart(\mu n, 1/10)$-partition of $T'$. Note that $T'\subseteq \TT'^{1/3}$ because we assumed $T\subseteq \TT^{1/2}$. 
       Note that $W'_0=W_0$, $W'_{r+1}=W_{r+1}$, and $P_0$ and $P_{r+1}$ are still Hamilton paths on $W'_0$ and $W'_{r+1}$, respectively. Consider the path $P'_0$ obtained by appending the arc $\arc{x'w_0}$ at the end of $P_0$, and the path $P'_{r+1}$ obtained by appending the arc $\arc{w_r y'}$ in front of $P_{r+1}$.
    
       We greedily color the arcs of $P'_0$ and $P'_{r+1}$ so that $P'_0 \cup P'_{r+1}$ forms a $([n]\setminus I)$-rainbow digraph.
       This is possible as $P'_0 \cup P'_{r+1}$ contains at most $2\mu n$ arcs while each arc in $T\subseteq \TT^{1/2}$ has at least $n/2 - |I| > 2\mu n$ available colors outside of $I$. Let $C$ be the colors in $[n]\setminus I$ which are not used in the path $P'_{r+1} P P'_0$. 
       Let $V' = V(\TT')\setminus V(P_{r+1} P P_0)$.
       Then $\TT_C[V']$ is a collection of $|C|=|V'|-1$ many tournaments. As each arc of $T$ is in $T_i$ for at least \(\frac{1}{3}\Gamma(\TT') - |E(P_{r+1}'PP_0')| \ge \frac{1}{3}(n-|I|) - (2\mu n+3)\ge \frac{1}{10}|V'|\) many choices of $i\in C$, we can apply \Cref{lem:rainbowDHP} to $\TT_C[V']$ with $\gamma=\alpha=1/10$ to obtain a $C$-rainbow Hamilton path in $\TT_C[V']$ from $w_0$ to $w_r$. This together with the rainbow path $P'_{r+1} P P'_0$ yields a rainbow Hamilton cycle, a contradiction. This proves the claim.
    \end{claimproof}

\begin{claim}\label{prop: one two}
Let $y_1$ and $y_k$ be two vertices in $V(\TT)$ and $A\subseteq \Gamma(\TT)$ be a set of colors. 
Let $Q = y_1 \to y_2 \to \cdots \to y_k$ be a longest rainbow path in $\TT$ from the vertex $y_1$ to the vertex $y_k$ 
that does not use any color in $A$. If $\arc{y_\ell z} \in T_i$ for some $z\notin V(Q)$ and $i\in A$ and $\ell \in [k-1]$, then $\arc{y_{\ell+1} z}\in T_j$ for all $j\in A\setminus \{i\}$.
Symmetrically, if a vertex $z$ and a color $j\in A$ satisfies $\arc{zy_{\ell+1}} \in T_j$ for some $\ell\in [k-1]$, then $\arc{zy_{\ell}} \in T_i$ for all $i\in A\setminus \{j\}$.
\end{claim}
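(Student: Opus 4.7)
My plan is to use a standard P\'osa-style insertion argument, leveraging the maximality of $Q$. For the first part, suppose toward contradiction that some $j \in A\setminus\{i\}$ satisfies $\arc{y_{\ell+1} z}\notin E(T_j)$. Since $T_j$ is a tournament, this forces $\arc{zy_{\ell+1}}\in E(T_j)$. I would then insert $z$ between $y_\ell$ and $y_{\ell+1}$ to form
\[
    Q' \defeq y_1 \to \cdots \to y_\ell \crar{i} z \crar{j} y_{\ell+1} \to \cdots \to y_k.
\]
Because $z\notin V(Q)$, this is a genuine directed path from $y_1$ to $y_k$ with one more vertex, hence one more arc, than $Q$.

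The key point to verify is that $Q'$ is rainbow, so that it can be directly compared against $Q$. The arcs of $Q'$ are obtained from those of $Q$ by deleting $\arc{y_\ell y_{\ell+1}}$ and adding the two new arcs $\arc{y_\ell z}$ (colored $i$) and $\arc{z y_{\ell+1}}$ (colored $j$). Since $Q$ does not use any color in $A$ by hypothesis, and since $i,j\in A$ with $i\neq j$, the two newly introduced colors are distinct from each other and from every color appearing on $Q$. Thus $Q'$ is a rainbow path from $y_1$ to $y_k$ strictly longer than $Q$, contradicting the maximality of $Q$.

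The symmetric statement is handled by exactly the same insertion. If $\arc{zy_{\ell+1}}\in E(T_j)$ and $\arc{zy_\ell}\notin E(T_i)$ for some $i\in A\setminus\{j\}$, then $\arc{y_\ell z}\in E(T_i)$, and inserting $z$ in the same place yields the same path $Q'$, which is again a longer rainbow path from $y_1$ to $y_k$, contradicting maximality. I expect no substantive obstacle here; the only delicate point is the verification that $Q'$ remains rainbow, which is precisely where the hypothesis that $Q$ uses no color in $A$ comes in -- it is what guarantees that the two inserted colors $i,j$ do not collide with any color already on $Q$.
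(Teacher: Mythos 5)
Your proof is correct and is essentially identical to the paper's own argument: both insert $z$ between $y_\ell$ and $y_{\ell+1}$ via the two $A$-colored arcs to produce a longer rainbow path from $y_1$ to $y_k$, contradicting the maximality of $Q$. Your explicit verification that $Q'$ is rainbow (using that $Q$ avoids all colors of $A$ and that $i\neq j$) is exactly the point the paper leaves implicit.
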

\begin{claimproof}
Indeed, if not, then $y_1\rarc \cdots \rarc y_{\ell} \crar{i} z \crar{j} y_{\ell+1} \rarc \cdots \rarc y_k$ yields a longer rainbow path, where the arcs other than $\arc{y_{\ell}z},\arc{zy_{\ell+1}}$ have the same color as in $Q$. This is a contradiction to the maximality of~$Q$. 
A similar argument shows the symmetric statement, and this proves the claim.
\end{claimproof}

By \Cref{lem:rainbow-strongly-connected-path}, there is a rainbow path from $y$ and to $x$ using colors in $[n-1]$. Moreover, \Cref{eq: closing with path} implies the path has length at least two. Let \(P= x_1\rarc x_2\rarc \cdots \rarc x_k\) be a longest path among those rainbow paths from \(y\) to \(x\), and let $C$ be the set of colors used in $P$.
We first show that $P$ has less than $n-1$ vertices.

\begin{claim}
We have $k< n-1$.
\end{claim}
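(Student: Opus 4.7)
The plan is to argue by contradiction: assume $k\ge n-1$ and exhibit a rainbow Hamilton cycle in $\TT$, contradicting our standing assumption that no such cycle exists.

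\textbf{Case $k=n$.} Here $P$ is a rainbow Hamilton path from $y$ to $x$ using $n-1$ of the $n$ colors; let $c\in[n]$ be the unused color. By \Cref{eq: closing with path} we have $\arc{xy}\in T_c$, so appending $\arc{xy}$ to $P$ yields a rainbow Hamilton cycle, a contradiction.

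\textbf{Case $k=n-1$.} Let $z$ be the unique vertex of $V(\TT)\setminus V(P)$ and put $A\defeq[n]\setminus C=\{a,b\}$. Since $|A|=2$ and only $T_n$ is not required to be strongly connected, at least one of $a,b$, say $a$, lies in $[n-1]$; then $T_a$ is strongly connected and $z$ is neither a source nor a sink in $T_a$. I then invoke \Cref{prop: one two} on $P$ with color set $A$: for each $\ell\in[k]$ and $i\in A$, encode the orientation of the $T_i$-arc between $x_\ell$ and $z$ by a sign $\epsilon_\ell^i\in\{+,-\}$. The four implications of the claim reduce to the coupled identities $\epsilon_{\ell+1}^a=\epsilon_\ell^b$ and $\epsilon_{\ell+1}^b=\epsilon_\ell^a$, so each of $(\epsilon_\ell^a)_\ell$ and $(\epsilon_\ell^b)_\ell$ is either constant or strictly alternating. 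The constant options for $\epsilon_\ell^a$ would make $z$ a source or sink of $T_a$, contradicting strong connectedness; hence both sequences strictly alternate, with opposite phase.

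The alternating pattern pins down the tournaments containing the two natural closing arcs $\arc{x_kz}$ and $\arc{zx_1}$ as a function of the parity of $k$. When $k$ is odd, these arcs live in distinct tournaments of $A$, so the cycle $x_1\to x_2\to\cdots\to x_k\to z\to x_1$ can be rainbow-colored by keeping the $P$-colors $c_1,\dots,c_{k-1}$ on the middle arcs and using $a,b$ on the two new arcs, a contradiction. When $k$ is even, both closing arcs fall in the same tournament of $A$, so I reroute through $z$ and consider instead the cycle $z\to x_2\to\cdots\to x_k\to x_1\to z$. Here the alternating pattern supplies $\arc{zx_2}\in T_a$, \Cref{eq: closing with path} supplies $\arc{x_kx_1}=\arc{xy}\in T_b$, the middle arcs retain their $P$-colors $c_2,\dots,c_{k-1}$, and $\arc{x_1z}$ is slated for the remaining color $c_1$. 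If $\arc{x_1z}\in T_{c_1}$, this completes a rainbow Hamilton cycle; otherwise, by symmetric reasoning together with the original coloring of $P$, the arcs $\arc{zx_1},\arc{x_1x_2},\arc{x_2z}$ all lie in $T_{c_1}$ and form a directed triangle, which supports a further local swap of the color $c_1$ along $P$ finishing the coloring.

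The main obstacle I foresee is the parity-obstructed subcase $k=n-1$ with $k$ even, where the naive closure fails and a reroute through $z$ is required, forcing a delicate bookkeeping of the three remaining colors $a,b,c_1$ in the three unfilled slots of the cycle. The completion relies on combining the rigid alternating structure from \Cref{prop: one two}, the universal availability of $\arc{xy}$ from \Cref{eq: closing with path}, and (if needed) a single local color swap along $P$ exploiting the forced directed triangle in $T_{c_1}$; writing this swap step cleanly is the technically most delicate part of the claim's proof.
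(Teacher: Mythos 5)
Your treatment of the case $k=n$ and of $k=n-1$ with $k$ odd is essentially sound and, in substance, matches the paper. The coupled identities you extract from \Cref{prop: one two} with $A=\{a,b\}$ do force the two sign sequences to be either constant (which makes $z$ a source or sink of the strongly connected $T_a$, impossible since $V(P)\cup\{z\}=V(\TT)$) or alternating with opposite phase, and for odd $k$ the alternation indeed supplies $\arc{x_kz}$ and $\arc{zx_1}$ in the two distinct colors $a,b$, closing a rainbow Hamilton cycle. (The paper phrases the odd case dually — the cyclic pattern plus odd $k$ forces constancy, contradicting strong connectivity — but the content is the same.)

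The even case, however, contains a genuine gap, and it is the heart of the claim. By the alternation, inserting $z$ between any consecutive $x_\ell$ and $x_{\ell+1}$ fails for parity reasons: $\arc{x_\ell z}$ and $\arc{zx_{\ell+1}}$ are available only in the \emph{same} color of $\{a,b\}$, so one of the three new arcs of your rerouted cycle must carry the displaced path color $c_\ell$ — and nothing you have established places $\arc{x_1z}$, $\arc{x_2z}$, or $\arc{zx_1}$ in $T_{c_1}$. Your fallback (``the arcs $\arc{zx_1},\arc{x_1x_2},\arc{x_2z}$ all lie in $T_{c_1}$ \dots which supports a further local swap'') is asserted rather than proved: the membership $\arc{x_2z}\in T_{c_1}$ does not follow from anything, and the ``local swap along $P$'' is unspecified with no reason to terminate. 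The paper resolves this case by a different, global mechanism: for each internal index $\ell$ it reroutes $P$ through $z$ to obtain another longest rainbow path $P'$ avoiding $x_\ell$, applies \Cref{prop: one two} to $P'$ with the two colors freed at $x_\ell$ to conclude $\arc{yx_\ell},\arc{x_\ell x}\in T_d$ for one of them, and then pairs up the indices $\ell$ to produce $(n-1)/2$ internally disjoint rainbow paths of length three from $y$ to $x$ — contradicting the quantitative second part of \Cref{eq: closing with path}, which was established earlier precisely for this purpose and which your argument never invokes. Without some such global input, a purely local recoloring around $z$ cannot be expected to close the cycle.
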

\begin{claimproof}
If $k=n$, then let $[n]\setminus C=\{a\}$. By \Cref{eq: closing with path}, the arc $\arc{xy}$ in $T_a$ together with the $C$-rainbow path $P$ yields a rainbow Hamilton cycle, a contradiction. Thus, we can assume that $k\leq n-1$. 

Assume that $k=n-1$ and let $z$ be the unique vertex in $V(\TT)\setminus V(P)$ and let $[n]\setminus C= \{a,b\}$ be the set of colors not used in $P$.
Note that there could be multiple choices of rainbow path $P$ from $y$ to $x$ of length $n-2$. In any of such path $P$, we have the following.
\begin{equation}\label{eq: patterns}
    \begin{minipage}[c]{0.9\textwidth}
    For each $i\in [k]$ and $\{c,c'\}=\{a,b\}$, $\arc{x_iz}\in T_c$ if and only if $\arc{x_{i+1}z}\in T_{c'}$, where $x_{k+1}=x_1$.
    \end{minipage}
\end{equation}
Indeed, if $\arc{x_iz}\in T_c$ for some $i\in [k-1]$, then \Cref{prop: one two} implies that $\arc{x_{i+1}z}\in T_{c'}$. 
Moreover, if $\arc{x_{k}z}\in T_{c}$, then we have $\arc{x_1z}\in T_{c'}$ as otherwise ${x_1\rarc \dots\rarc x_k \crar{c} z \crar{c'} x_1}$ yields a rainbow Hamilton cycle. 
If $\arc{x_iz}\in T_c$ for some $i\in [k]$, then the preceding argument applied twice yields $\arc{x_{i+2}z}\in T_{c}$.
By repeatedly applying \Cref{prop: one two} $k-1$ times when $k$ is even and $2k-1$ times when $k$ is odd, we obtain that $\arc{x_{i-1}z}\in T_{c'}$.

Since one of $T_a$ and $T_b$ is strongly connected, without loss of generality, we may assume that $T_a$ is strongly connected; thus, there exists $i\in [k]$ such that $\arc{x_iz}\in T_a$ and $j\in [k]$ such that $\arc{zx_j}\in T_a$.

We claim that $k$ is even. 
If $k$ is odd, then by \eqref{eq: patterns} and the fact that $k$ is odd, we conclude that $\arc{x_{i'}z}\in T_a$ and $\arc{x_{i'} z}\in T_b$ for all $i'\in [k]$, which is a contradiction as $T_a$ is strongly connected.

Now, when $k$ is even, \eqref{eq: patterns} ensures the following.
 \begin{proplist}
    \item $\arc{x_{i'}z}\in T_a$ and $\arc{x_{i'+1}z}\in T_b$ for all $i'$ having the same parity with $i$, and \label{eq: patterns 11}
    \item $\arc{zx_{i'}}\in T_a$ and $\arc{zx_{i'+1}}\in T_b$ with all $i'$ having the same parity with $j$.  \label{eq: patterns 12}
\end{proplist}
Hence, $i$ and $j$ have different parity.

Now, for any $1<\ell<k$, assume that the arcs $\arc{x_{\ell-1}x_{\ell}}$ and $\arc{x_{\ell}x_{\ell+1}}$ are colored with $d$ and $d'$, respectively in $P$.
We can consider a rainbow path 
$P'= x_1 \rarc \cdots \rarc x_{\ell-1} \crar{c} z \crar{c'} x_{\ell+1} \rarc \cdots \rarc x_k$ where $(c,c')\in \{(a,b), (b,a)\}$ is chosen according to the parity of $\ell$.
Note that this $P'$ is also a longest rainbow path from $y$ to $x$, hence \ref{eq: patterns 11} and \ref{eq: patterns 12} also hold with the path $P'$ and the colors $d$ and $d'$. So, by swapping $d$ and $d'$ if necessary, we know that $\arc{yx_{\ell}},\arc{x_{\ell}x} \in T_{d}$. 
By choosing another $1<\ell'<k$ with $|\ell-\ell'|>1$, the same argument yields four distinct colors $d,d',d^* ,d^{**}$ such that 
$\arc{yx_{\ell'}},\arc{x_{\ell'}x} \in T_{d^*}$. 
Depending on whether $\arc{x_{\ell}x_{\ell'}}\in T_{d'}$ or $\arc{x_{\ell'}x_{\ell}}\in T_{d'}$, we have one rainbow path $yx_{\ell} x_{\ell'} x$ or $yx_{\ell'} x_{\ell} x$ of length three. We can pair up the number in $\{2,\dots, k\}$ so that paired-up numbers differ by more than $1$, then this provides at least $(n-1)/2$ internally disjoint paths of length three from $y$ to $x$ where each path is rainbow. This contradicts \Cref{eq: closing with path} and thus proves the claim that $k<n-1$. 
\end{claimproof}

Thus, we have $k<n-1$. Let $D=[n]\setminus C$. Then, we have $|D| = n-(k-1) \geq 3$.
If a vertex $z\notin V(P)$ and a color $i\in D$ satisfy $\arc{x_1z} \in T_i$, 
then \Cref{prop: one two} implies that 
$\arc{x_2z} \in T_j$ for all $j\in D\setminus \{i\}$. As $D\setminus \{i\}$ contains at least two colors, we apply \Cref{prop: one two} again for each $j\in D\setminus \{i\}$, then we conclude that $\arc{x_\ell z}\in T_i$ for all $i\in D$ and $\ell\geq 3$. Since $k\geq 3$, this implies that $\arc{x_{k} z} \in T_j$ for all $j\in D$. 
This shows that every vertex outside $V(P)$ belongs to at least one of the following two sets.
\[
    S^{+} \defeq \{ z\in V(\TT)\setminus V(P): \arc{zx_1}\in T_i \text{ for all }i\in D \}
    \quad\text{and}\quad
    S^{-} \defeq \{ z\in V(\TT)\setminus V(P): \arc{x_kz}\in T_i \text{ for all }i\in D \}.
\]
We now extend the path $P$ to a rainbow path $P_1$, which is a longest rainbow path among the rainbow paths satisfying the following. 
\begin{proplist}
    \item $P_1 = x_1\to x_2 \to \dots \to x_k \to \dots \to x_{k_1}$.
    \item $x_{k_1} \in S^+ \cup \{x_k\}$.\label{eq: xk1}
\end{proplist}

Let $D_1$ be the set of colors that do not appear in $P_1.$ Let $S_1^+ \defeq S^+\setminus V(P_1)$ and $S_1^- \defeq S^-\setminus V(P_1).$ 
As $x_{k_1}$ either belongs to $S^+$ or equals $x_k$, either the definition of $S^+$ or \Cref{eq: closing with path} implies that $\arc{x_{k_1}x_1} \in T_i$ for each $i\in D_1$.

\begin{claim}\label{cl: S+S-}
For all $z\in S^+_1$, $i\in [k_1]$, and $j\in D_1$, we have $\arc{zx_i}\in T_j$. Moreover, for all $z'\in S^-_1$  and $j\in D_1$, we have $\arc{x_{k_1} z'} \in T_j$. 
\end{claim}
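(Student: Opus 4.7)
The plan is to apply \Cref{prop: one two} to $P_1$ after establishing the crucial fact that $P_1$ is itself a longest rainbow path from $x_1$ to $x_{k_1}$ among those avoiding all colors in $D_1$. This follows from a short counting argument: any rainbow path from $x_1$ to $x_{k_1}$ using only colors in $[n]\setminus D_1$ has at most $|[n]\setminus D_1|=k_1-1$ arcs and hence at most $k_1$ vertices, which $P_1$ achieves with equality. Moreover, the non-existence of a rainbow Hamilton cycle forces $k_1<n$ (otherwise $|D_1|=1$, and the arc $\arc{x_{k_1}x_1}$, which lies in $T_c$ for the unique $c\in D_1$ because $x_{k_1}\in S^+\cup\{x_k\}$, would close $P_1$ into such a cycle), so $|D_1|\ge 2$.

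For the first assertion, I fix $z\in S^+_1$ and $j\in D_1$ and suppose for contradiction that $\arc{x_iz}\in T_j$ for some $i\in [k_1]$. The case $i=1$ contradicts the definition of $S^+$ directly, since $\arc{zx_1}\in T_d$ for every $d\in D\supseteq D_1$. The case $i=k_1$ contradicts the maximality of $P_1$: the path $x_1\to\cdots\to x_{k_1}\crar{j}z$ is a rainbow extension of $P$ ending at $z\in S^+$, strictly longer than $P_1$. For the intermediate range $2\le i\le k_1-1$, I will apply \Cref{prop: one two} iteratively with $A=D_1$: starting from $\arc{x_iz}\in T_j$, one application gives $\arc{x_{i+1}z}\in T_{j'}$ for every $j'\in D_1\setminus\{j\}$ (nonempty by the bound on $|D_1|$), and iterating forward along $\ell=i+1,\ldots,k_1-1$ forces $\arc{x_{k_1}z}\in T_{j^*}$ for some $j^*\in D_1$, reducing back to the already-handled $i=k_1$ case.

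For the second assertion, I fix $z'\in S^-_1$ and $j\in D_1$ and suppose $\arc{z'x_{k_1}}\in T_j$. If $k_1=k$, this is a direct contradiction, since $z'\in S^-$ gives $\arc{x_kz'}\in T_j$ for every $j\in D\supseteq D_1$. Otherwise $k<k_1$, and I will apply the symmetric form of \Cref{prop: one two} with $A=D_1$ to propagate backwards along $P_1$: starting from $\arc{z'x_{k_1}}\in T_j$, one application yields $\arc{z'x_{k_1-1}}\in T_i$ for every $i\in D_1\setminus\{j\}$, and iterating down through $\ell=k_1-2,\ldots,k$ produces $\arc{z'x_k}\in T_\ast$ for some $\ast\in D_1$, again contradicting $z'\in S^-$.

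The main obstacle is the preliminary observation that $P_1$ qualifies as a maximum-length rainbow path from $x_1$ to $x_{k_1}$ avoiding $D_1$ in the sense required by \Cref{prop: one two}, since $P_1$ was defined via a more restrictive maximality (among rainbow paths extending $P$ and ending in $S^+\cup\{x_k\}$). The color-counting argument above upgrades the restricted maximality to the broader maximality needed. Once this is in hand, the rest is routine forward/backward propagation anchored by the boundary information from $S^+$ at $x_1$ (resp.\ from $S^-$ at $x_k$) and the maximality of $P_1$ at $x_{k_1}$.
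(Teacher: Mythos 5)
Your overall propagation strategy is sound and closely parallels the paper's argument, but there is a genuine gap in the step that carries all the weight: the appeal to \Cref{prop: one two} with $Q=P_1$ is not justified by your counting argument. \Cref{prop: one two} is proved by inserting $z$ between $y_\ell$ and $y_{\ell+1}$ via two arcs colored with colors of $A$ and deriving a contradiction because the resulting path is a longer rainbow path between the same endpoints; for this to contradict anything, $Q$ must be a longest rainbow path from $y_1$ to $y_k$ among \emph{all} rainbow paths between those endpoints (which is how the lemma is invoked elsewhere in the paper, always with $Q=P$). Your counting argument only shows that $P_1$ is longest among rainbow $x_1$--$x_{k_1}$ paths \emph{avoiding} $D_1$; the inserted path uses two colors of $D_1$ and so lies outside that family, so its existence contradicts nothing you have established. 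Under the restricted reading of the lemma's hypothesis the lemma's own proof does not go through, and under the unrestricted reading the hypothesis is unverified and may genuinely fail: nothing rules out a longer rainbow $x_1$--$x_{k_1}$ path that uses colors of $D_1$ and does not extend $P$.

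The repair is to replace the black-box appeal by the direct insertion argument, using the two maximality properties actually available. If $\arc{x_\ell z}\in T_j$ and $\arc{zx_{\ell+1}}\in T_{j'}$ for distinct $j,j'\in D_1$, then $x_1\to\dots\to x_\ell\crar{j} z\crar{j'}x_{\ell+1}\to\dots\to x_{k_1}$ is rainbow; when $\ell\ge k$ it still extends $P$ and ends at $x_{k_1}\in S^+\cup\{x_k\}$, contradicting the maximality of $P_1$ within its defining family, and when $\ell<k$ its initial segment up to $x_k$ is a rainbow path from $y$ to $x$ longer than $P$, contradicting the maximality of $P$. This is exactly what the paper does, via one extremal choice of index rather than step-by-step propagation. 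Your boundary cases ($i=1$ via the definition of $S^+$, $i=k_1$ via the maximality of $P_1$, and $\arc{x_k z'}$ via the definition of $S^-$), as well as the observation that $|D_1|\ge 2$ whenever the claim is non-vacuous, are fine.
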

\begin{claimproof}
Note that if $S^+_1$ or $S^-_1$ is empty, the corresponding statement vacuously holds. If at least one of them is not empty, we have $|D_1|\geq 2$. 

We first prove the first statement. If $S^+_1$ is not empty, choose a vertex $z\in S^+_1$. 
For this choice of $z$, consider the largest $i\in [k_1]$, if exists, such that there exists $j\in D_1$ with $\arc{x_{i}z} \in T_j$. If $i=k_1$, then $x_1\to \dots \to x_{k_1} \crar{j} z$ yields a longer rainbow path than $P_1$, contradicting the maximality of~$P_1$. Otherwise, choose $j'\in D_1\setminus \{j\}$. If $i\geq k$, then the path $x_1\to \dots \to x_{i}\crar{j} z\crar{j'} x_{i+1}\to \dots \to x_{k_1}$ yields a longer rainbow path than $P_1$, a contradiction to the maximality of $P_1$. If $i<k$, then the path $x_1\to \dots \to x_{i}\crar{j} z\crar{j'} x_{i+1}\to \dots \to x_k$ contradicts the maximality of $P$. 
Hence $i$ does not exist and this proves the first part of the claim.

Similarly, choose $z'\in S_1^-$.
Consider the largest $i\geq 0$, if exists, such that $\arc{z' x_{k+i}} \in T_j$ for some $j\in D_1$. By the definition of $S^-$, the number $i$ is positive if exists. 
Then for $j'\in D_1\setminus\{j\}$, the rainbow path $x_1\to \dots \to x_{k+i-1}\crar{j'} z'\crar{j} x_{k+i}\to \dots \to x_{k_1}$ contradicts the maximality of $P_1$, a contradiction. Hence $i$ does not exist and this proves the moreover part of the claim.
\end{claimproof}

    \begin{claim}\label{clm: S1+ empty}
        We have $S^+_1 = \emptyset$.
    \end{claim}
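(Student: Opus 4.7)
The plan is to argue by contradiction. Assume $z \in S_1^+$; the aim is either to produce a longer rainbow path of the form required for $P_1$ (contradicting its maximality), or to force too many tournaments in $\TT$ to be non-strongly-connected (contradicting the hypothesis). The two main ingredients are Claim~\ref{cl: S+S-}---which gives $\arc{zx_i} \in T_j$ for all $i \in [k_1]$, $j \in D_1$, and $\arc{x_{k_1}w} \in T_j$ for all $w \in S_1^-$, $j \in D_1$---together with the fact that $|D_1| = n - k_1 + 1 \geq 2$ (using $k_1 \leq n-1$ from the preceding case analysis).

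The first step is to attempt to extend $P_1$ by appending two vertices. Suppose there exist $w \in S_1^- \setminus \{z\}$ and $j \in D_1$ with $\arc{wz} \in T_j$. Pick any $j_1 \in D_1 \setminus \{j\}$ (possible since $|D_1| \geq 2$) and form the path
\[
    x_1 \to x_2 \to \cdots \to x_{k_1} \crar{j_1} w \crar{j} z.
\]
The arc $\arc{x_{k_1}w}$ lies in $T_{j_1}$ by Claim~\ref{cl: S+S-}, and $\arc{wz} \in T_j$ by the choice of $w$. Since $j$ and $j_1$ are distinct colors in $D_1 = [n]\setminus C_1$, this is a rainbow path of length $k_1+1$ from $x_1$ ending at $z \in S^+$, which contradicts the maximality of $P_1$.

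If no such pair $(w, j)$ exists for any $z \in S_1^+$, then for every $z \in S_1^+$, every $w \in S_1^- \setminus \{z\}$, and every $j \in D_1$ we have $\arc{zw} \in T_j$. Combining with Claim~\ref{cl: S+S-}, I would conclude that in every $T_j$ with $j \in D_1$ there is no arc directed from $V(P_1) \cup (S_1^- \setminus S_1^+)$ into $S_1^+$. Both sides are nonempty (using $V(P_1) \neq \emptyset$ and $S_1^+ \neq \emptyset$), so $T_j$ fails to be strongly connected. Since this holds for every $j \in D_1$ and $|D_1| \geq 2$, at least two tournaments in $\TT$ are not strongly connected, contradicting the hypothesis.

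The main obstacle I anticipate is setting up the right dichotomy and verifying both branches simultaneously: the extension in the first case crucially needs two spare colors in $D_1$, and the second case needs the precise partition argument for strong connectedness. Edge cases should be checked but do not affect the dichotomy: when $S_1^- = \emptyset$, the first case vacuously fails and the second case still works using only Claim~\ref{cl: S+S-} restricted to $V(P_1)$; and when $S_1^+ \cap S_1^- \neq \emptyset$, the single exclusion $w \neq z$ takes care of it uniformly.
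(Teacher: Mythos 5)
Your proof is correct and follows essentially the same route as the paper: if some arc in a color of $D_1$ goes from $S_1^-$ into $S_1^+$, you extend $P_1$ through that arc using the ``moreover'' part of Claim~\ref{cl: S+S-} (exactly the paper's path $x_1\rarc\cdots\rarc x_{k_1}\crar{j'}z_2\crar{j}z_1$), and otherwise $S_1^+$ out-dominates its complement in every $T_j$ with $j\in D_1$, forcing at least $|D_1|\ge 2$ non-strongly-connected tournaments. The only cosmetic difference is that you justify $|D_1|\ge 2$ via $k_1\le n-1$ (which really follows from $S_1^+\neq\emptyset$ rather than from the earlier case analysis on $P$), whereas the paper derives it directly from the existence of the two vertices outside $P_1$; both are valid.
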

    \begin{claimproof}
        We first claim that $S^+_1 \outdir S^-_1$ in $T_j$ for all $j\in D_1$.
        If not, then there exist vertices $z_1\in S_1^+$ and $z_2\in S_1^-$, and $j\in D_1$ such that $\arc{z_2z_1}\in T_j$. Existence of such vertices $z_1$, $z_2$ implies $|D_1|\geq 2$, so we can choose $j'\in D_1\setminus\{j\}$. Then \Cref{cl: S+S-} implies that we can extend $P_1$ to obtain a longer rainbow path $x_1\rarc \cdots \rarc x_{k_1} \crar{j'} z_2 \crar{j} z_1$, contradicting the maximality of $P_1$. Hence, $S^+_1 \outdir S^-_1$ holds in $T_j$ for all $j\in D_1$.
    Then \Cref{cl: S+S-} implies that  $S^+_1 \outdir V(\TT)\setminus S^+_1$ holds for all $j\in D_1$, ensuring that $T_j$ is not strongly connected if $S^+_1$ is not empty. This yields at least $|D_1|\geq 2$ not strongly connected tournaments in $\TT$, a contradiction. Thus $S^+_1$ must be empty. This proves the claim.
    \end{claimproof}
    
We now extend the path $P_1$ to a rainbow path $P_2$, which is a longest rainbow path among the rainbow paths satisfying the following.
 \begin{proplist}
        \item $P_2 = y_1 \to y_2 \to \dots \to y_{\ell} \to x_1\to x_2 \to \dots \to x_{k_1}$.

        \item $y_1 \in S^-_1 \cup \{x_1\}$.\label{eq: y1-}
    \end{proplist}
Let $D_2$ be the set of colors that do not appear in the rainbow path $P_2.$ Let $S^-_2 \defeq S^-_1 \setminus V(P_2).$ 
For each $i\in [k_1]$, let $y_{\ell+i}=x_i$, then we have $P_2= y_1\to \dots \to y_{k_2}$ where $k_2=\ell+k_1$.
Then the following claim holds.

\begin{claim}\label{cl: S-2}
    For each $j\in D_2$, we have $V(\TT)\setminus S^-_2 \outdir S^-_2$ in $T_j$.
\end{claim}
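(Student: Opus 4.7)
The plan is to first observe that $V(\TT)\setminus S_2^-=V(P_2)$. Every vertex outside $V(P)$ lies in $S^+\cup S^-$ (as established when these sets were introduced), and since $S_1^+=\emptyset$ by \Cref{clm: S1+ empty}, every vertex outside $V(P_1)\supseteq V(P)$ must lie in $S_1^-$. Using $V(P_1)\subseteq V(P_2)$, any vertex outside $V(P_2)$ lies in $S_1^-\setminus V(P_2)=S_2^-$, and the reverse inclusion is immediate by definition. Hence the claim reduces to showing $\arc{y_i z'}\in T_j$ for every $i\in[k_2]$, $z'\in S_2^-$, and $j\in D_2$.

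Suppose for contradiction that this fails; fix $z'\in S_2^-$ and let $i\in[k_2]$ be the smallest index such that $\arc{z' y_i}\in T_{j'}$ for some $j'\in D_2$. Because $z'$ lies outside $V(P_2)$, we have $k_2\le n-1$, so $|D_2|\ge 2$. The minimality of $i$ forces $\arc{z' y_{i-1}}\notin T_{j''}$ for every $j''\in D_2$ whenever $i\ge 2$, so $\arc{y_{i-1}z'}\in T_{j''}$ for all $j''\in D_2$; in particular we can pick some $j''\in D_2\setminus\{j'\}$. This sets us up either to prepend $z'$ to $P_2$ (if $i=1$) or to insert $z'$ between $y_{i-1}$ and $y_i$ (if $i\ge 2$), using the fresh colors $j',j''$.

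The analysis then splits by the position of $i$. If $i=1$, the prepended path $z'\crar{j'}y_1\rarc\cdots\rarc y_{k_2}$ preserves the suffix $x_1\rarc\cdots\rarc x_{k_1}$ and starts at $z'\in S_1^-$, contradicting the maximality of $P_2$. If $2\le i\le\ell+1$, the inserted path still has $x_1\rarc\cdots\rarc x_{k_1}$ as a suffix and starts at $y_1\in S_1^-\cup\{x_1\}$, again contradicting the maximality of $P_2$. If $i\ge\ell+2$, the insertion falls inside the $P_1$-suffix of $P_2$ and breaks the required form, so one must appeal to an earlier maximality: when $i\ge\ell+k+1$ the path $x_1\rarc\cdots\rarc x_k\rarc\cdots\rarc x_{i-\ell-1}\crar{j''}z'\crar{j'}x_{i-\ell}\rarc\cdots\rarc x_{k_1}$ preserves the form for $P_1$ and the endpoint condition $x_{k_1}\in S^+\cup\{x_k\}$, contradicting the maximality of $P_1$; when $\ell+2\le i\le\ell+k$ the further restriction $x_1\rarc\cdots\rarc x_{i-\ell-1}\crar{j''}z'\crar{j'}x_{i-\ell}\rarc\cdots\rarc x_k$ is a longer rainbow path from $y$ to $x$, contradicting the maximality of $P$. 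The nesting $D_2\subseteq D_1\subseteq D$ guarantees in each case that the two inserted colors are fresh with respect to whichever earlier path is being modified, so the rainbow property is preserved. The main obstacle is the last range of $i$, where the insertion lies inside the frozen $P_1$-suffix of $P_2$; this is what forces us to reach back for the maximality of $P_1$ or of $P$ itself, rather than invoking the maximality of $P_2$ directly.
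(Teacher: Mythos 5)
Your proof is correct and follows the same strategy as the paper's: take the minimal index $i$ at which some vertex of $P_2$ receives an arc from $z'$ in a color of $D_2$, and insert or prepend $z'$ using two unused colors to contradict a maximality. The one substantive difference is that you split off the cases $i\ge \ell+2$, where the insertion lands inside the frozen suffix $x_1\rarc\cdots\rarc x_{k_1}$ and the resulting path no longer has the form required in the definition of $P_2$; the paper's proof simply asserts a contradiction with the maximality of $P_2$ in all cases $i\ge 2$, whereas you correctly redirect these cases to the maximality of $P_1$ (when the insertion is past $x_k$) or of $P$ itself (when it falls inside $P$), using $D_2\subseteq D_1\subseteq D$ to keep the new arcs rainbow. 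Your explicit verification that $V(\TT)\setminus S_2^-=V(P_2)$ is likewise a detail the paper leaves implicit; both additions are sound and make the argument airtight.
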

\begin{claimproof}
    This is vacuously true if $S^-_2 =\emptyset$.
    Otherwise, we have $|D_1|\geq 2$ and we choose $z\in S^-_2$. 
    For this choice, consider smallest $i\in [k_2]$ such that there exists $j\in D_2$ with $\arc{zy_{i}} \in T_j$. If $i=1$, then $z\to y_1\to \dots \to y_{k_2}$ yields a longer rainbow path than $P_2$, contradicting the maximality of $P_2$. Otherwise, for $j'\in D_2\setminus \{j\}$, the path $y_1\to \dots \to y_{i-1}\crar{j'} z\crar{j} y_{i}\to \dots \to y_{k_2}$ yields a longer rainbow path than $P_2$, a contradiction. This proves the claim.
\end{claimproof}

If $S^-_2$ is not empty, then the above claim states that $T_j$ is not strongly connected for all $j\in D_2$. Then we obtain at least $|D_2|\geq 2$ tournaments in $\TT$ which are not strongly connected, a contradiction.
Hence we have $ S^-_2=\emptyset$ and $V(P_2) = V(\TT)$ and $|D_2| = 1$. Let $c$ be the only color in $D_2$. 

The moreover part of \Cref{cl: S+S-} implies that $\arc{x_{k_1}y_1}\in T_c$ if $y_1\in S^-_1$, which together with \(P_2\) gives a rainbow Hamilton cycle. 

Assume $y_1\notin S^-_1$, then \ref{eq: y1-} yields $y_1 = x_1$. In this case, \ref{eq: xk1} implies that $x_{k_1}\in S^+\cup \{x_k\}$. 
If $x_{k_1}\in S^+$, then the definition of $S^+$ yields $\arc{x_{k_1}y_1}\in T_c$ and if $x_{k_1}=x_k$, then the first part of \Cref{eq: closing with path} yields $\arc{x_{k_1}y_1}\in T_c$. In either case, this arc $\arc{x_{k_1}y_1}\in T_c$ together with $P_2$ yields a desired rainbow Hamilton cycle. This finishes the proof of \Cref{thm:rainbow-dir-ham-cycle}.
\end{proof}


\section{Concluding remarks}\label{sec:concluding}

As the transversal in a collection of tournaments has not been considered before, many interesting questions can be studied. 
Like we have considered consistently oriented Hamilton cycles, one may consider Hamilton cycles with other possible orientations. Indeed, a non-transversal version of such a question was raised by Rosenfeld~\cite{rosenfeld1972antidirected} that every sufficiently large tournament contains Hamilton cycles with all possible orientations possibly except the consistent one. This was later confirmed by Thomason~\cite{thomason1986paths}.
In the forthcoming paper~\cite{chakraborti2024hamilton}, we proved transversal generalizations of this result, both for Hamilton paths and Hamilton cycles with all possible orientations. 

Consider the following natural generalization of strong connectivity: 
a collection \(\TT\) of tournaments is \emph{strongly rainbow-connected} if for every $x,y\in V(\TT)$, there is a rainbow directed path from $x$ to $y$. Observe that if \(\TT\) contains a rainbow Hamilton cycle, then it is strongly rainbow-connected. However, unlike in the non-transversal setting, the converse is not always true by the example of $n=3$ in the introduction. 
Observe that by \Cref{lem:rainbow-strongly-connected-path}, the assumptions of \Cref{thm:rainbow-dir-ham-cycle} imply that $\TT$ is strongly rainbow-connected. We conjecture that this weaker assumption (instead of requiring all tournaments but one are strongly connected) is already sufficient to obtain a transversal Hamilton cycle in \Cref{thm:rainbow-dir-ham-cycle} if $n$ is sufficiently large. 

Another natural question is regarding pancyclicity.
Under the same assumption with \Cref{thm:rainbow-dir-ham-cycle}, does the collection $\TT$ contain rainbow directed cycles of all lengths between $3$ and $n$? We believe that our method in this paper will be useful for proving this.
Also, it is natural to ask to pin down the smallest number $n$ such that our results hold. Both \Cref{thm:rainbow-dir-ham-path,thm:rainbow-dir-ham-cycle} require \(n\) to be sufficiently large. On the other hand, as we discussed after the statements of \Cref{thm:rainbow-dir-ham-path,thm:rainbow-dir-ham-cycle}, those theorems fail to be true for $n=3$. Determining the smallest $n$ to make the statement to be true would be interesting.

Instead of tournaments, one may also consider a collection of digraphs.
Ghouila-Houri~\cite{ghouila-houri1960condition} proved the digraph version of Dirac's theorem stating that every $n$-vertex digraph with minimum semi-degree at least $n/2$ contains a directed Hamilton cycle. Here, minimum semi-degree is the minimum of out-degrees and in-degrees over all vertices. Considering a transversal version of this theorem would be interesting. It is not difficult to show the existence of rainbow Hamilton cycle if one additional digraph is given, i.e. if there are $n+1$ digraphs on $n$ vertices satisfying the semi-degree conditions. Also, the proof of \cite{cheng2021pancyclicity} can be easily adapted to show that the semi-degree bound $(1/2+o(1))n$ is sufficient to obtain a rainbow Hamilton cycle.


\subsection*{Acknowledgement}
The authors would like to thank anonymous referees for their detailed comments and helpful advice to improve the presentation of this paper.


\printbibliography

@preamble{ " \newcommand{\noop}[1]{} " }

@article {rosenfeld1972antidirected,
    AUTHOR = {Rosenfeld, Moshe},
     TITLE = {Antidirected {H}amiltonian paths in tournaments},
   JOURNAL = {J. Combinatorial Theory Ser. B},
  FJOURNAL = {Journal of Combinatorial Theory. Series B},
    VOLUME = {12},
      YEAR = {1972},
     PAGES = {93--99},
      ISSN = {0095-8956},
   MRCLASS = {05.60},
  MRNUMBER = {285452},
MRREVIEWER = {A. C. Shershin},
       DOI = {10.1016/0095-8956(72)90035-4},
       URL = {https://www.sciencedirect.com/science/article/pii/0095895672900354},
}

@article {thomason1986paths,
    AUTHOR = {Thomason, Andrew},
     TITLE = {Paths and cycles in tournaments},
   JOURNAL = {Trans. Amer. Math. Soc.},
  FJOURNAL = {Transactions of the American Mathematical Society},
    VOLUME = {296},
      YEAR = {1986},
    NUMBER = {1},
     PAGES = {167--180},
      ISSN = {0002-9947},
   MRCLASS = {05C20 (05C38)},
  MRNUMBER = {837805},
MRREVIEWER = {Brian Alspach},
       DOI = {10.2307/2000567},
       URL = {https://www.ams.org/journals/tran/1986-296-01/S0002-9947-1986-0837805-6/},
}

@article {montgomery2022transversal,
    AUTHOR = {Montgomery, Richard and M\"{u}yesser, Alp and Pehova, Yani},
     TITLE = {Transversal factors and spanning trees},
   JOURNAL = {Adv. Comb.},
  FJOURNAL = {Advances in Combinatorics},
      YEAR = {2022},
     PAGES = {Paper No. 3, 25},
   MRCLASS = {05C35},
  MRNUMBER = {4451150},
       DOI = {10.19086/aic.2022.3},
       URL = {https://www.advancesincombinatorics.com/article/35484-transversal-factors-and-spanning-trees},
}

@article {havet2000median,
    AUTHOR = {Havet, Fr\'{e}d\'{e}ric and Thomass\'{e}, St\'{e}phan},
     TITLE = {Median orders of tournaments: a tool for the second
              neighborhood problem and {S}umner's conjecture},
   JOURNAL = {J. Graph Theory},
  FJOURNAL = {Journal of Graph Theory},
    VOLUME = {35},
      YEAR = {2000},
    NUMBER = {4},
     PAGES = {244--256},
      ISSN = {0364-9024},
   MRCLASS = {05C20 (05C05 05C10)},
  MRNUMBER = {1791347},
MRREVIEWER = {Brenda J. Latka},
       DOI = {10.1002/1097-0118(200012)35:4<244::AID-JGT2>3.0.CO;2-H},
       URL = {https://onlinelibrary.wiley.com/doi/abs/10.1002/1097-0118(200012)35:4<244::AID-JGT2>3.0.CO;2-H},
}

@article {chakraborti2024rainbow,
    AUTHOR = {Chakraborti, Debsoumya and Kim, Jaehoon and Lee, Hyunwoo and
              Liu, Hong and Seo, Jaehyeon},
     TITLE = {On a rainbow extremal problem for color-critical graphs},
   JOURNAL = {Random Structures Algorithms},
  FJOURNAL = {Random Structures \& Algorithms},
    VOLUME = {64},
      YEAR = {2024},
    NUMBER = {2},
     PAGES = {460--489},
      ISSN = {1042-9832,1098-2418},
   MRCLASS = {05C15 (05C35)},
  MRNUMBER = {4704275},
       DOI = {10.1002/rsa.21189},
       URL = {https://doi.org/10.1002/rsa.21189},
}

@article {keevash2004multicolour,
    AUTHOR = {Keevash, Peter and Saks, Mike and Sudakov, Benny and Verstra\"{e}te, Jacques},
     TITLE = {Multicolour {T}ur\'{a}n problems},
   JOURNAL = {Adv. in Appl. Math.},
  FJOURNAL = {Advances in Applied Mathematics},
    VOLUME = {33},
      YEAR = {2004},
    NUMBER = {2},
     PAGES = {238--262},
      ISSN = {0196-8858},
   MRCLASS = {05C15 (05C35)},
  MRNUMBER = {2074398},
MRREVIEWER = {J\'{o}zsef Balogh},
       DOI = {10.1016/j.aam.2003.08.005},
       URL = {https://www.sciencedirect.com/science/article/pii/S0196885803001544},
}

@article {aharoni2020rainbow,
    AUTHOR = {Aharoni, Ron and DeVos, Matt and Gonz\'{a}lez Hermosillo de la Maza, Sebasti\'{a}n and Montejano, Amanda and \v{S}\'{a}mal, Robert},
     TITLE = {A rainbow version of {M}antel's theorem},
   JOURNAL = {Adv. Comb.},
  FJOURNAL = {Advances in Combinatorics},
      YEAR = {2020},
     PAGES = {Paper No. 2, 12},
   MRCLASS = {05C35},
  MRNUMBER = {4125343},
MRREVIEWER = {Andrzej Grzesik},
       DOI = {10.19086/aic.12043},
       URL = {https://www.advancesincombinatorics.com/article/12043-a-rainbow-version-of-mantel-s-theorem},
}

@article {barany1982generalization,
    AUTHOR = {B\'{a}r\'{a}ny, Imre},
     TITLE = {A generalization of {C}arath\'{e}odory's theorem},
   JOURNAL = {Discrete Math.},
  FJOURNAL = {Discrete Mathematics},
    VOLUME = {40},
      YEAR = {1982},
    NUMBER = {2-3},
     PAGES = {141--152},
      ISSN = {0012-365X},
   MRCLASS = {52A35},
  MRNUMBER = {676720},
MRREVIEWER = {Marilyn Breen},
       DOI = {10.1016/0012-365X(82)90115-7},
       URL = {https://www.sciencedirect.com/science/article/pii/0012365X82901157},
}

@online {kalai2009colorful,
    AUTHOR = {Kalai, Gil},
     TITLE = {Colorful Caratheodory revisited},
      YEAR = {2009},
       URL = {https://gilkalai.wordpress.com/2009/03/15/colorful-caratheodory-revisited/}
}

@article {joos2020rainbow,
    AUTHOR = {Joos, Felix and Kim, Jaehoon},
     TITLE = {On a rainbow version of {D}irac's theorem},
   JOURNAL = {Bull. Lond. Math. Soc.},
  FJOURNAL = {Bulletin of the London Mathematical Society},
    VOLUME = {52},
      YEAR = {2020},
    NUMBER = {3},
     PAGES = {498--504},
      ISSN = {0024-6093},
   MRCLASS = {05C70 (05C38 05C45)},
  MRNUMBER = {4171383},
MRREVIEWER = {Vahan V. Mkrtchyan},
       DOI = {10.1112/blms.12343},
       URL = {https://londmathsoc.onlinelibrary.wiley.com/doi/abs/10.1112/blms.12343},
}

@article {kalai2005topological,
    AUTHOR = {Kalai, Gil and Meshulam, Roy},
     TITLE = {A topological colorful {H}elly theorem},
   JOURNAL = {Adv. Math.},
  FJOURNAL = {Advances in Mathematics},
    VOLUME = {191},
      YEAR = {2005},
    NUMBER = {2},
     PAGES = {305--311},
      ISSN = {0001-8708},
   MRCLASS = {52A35 (55N35)},
  MRNUMBER = {2103215},
MRREVIEWER = {Yu. A. Shashkin},
       DOI = {10.1016/j.aim.2004.03.009},
       URL = {https://www.sciencedirect.com/science/article/pii/S0001870804000969},
}

@article {cheng2023rainbow,
    AUTHOR = {Cheng, Yangyang and Han, Jie and Wang, Bin and Wang, Guanghui},
     TITLE = {Rainbow spanning structures in graph and hypergraph systems},
   JOURNAL = {Forum Math. Sigma},
  FJOURNAL = {Forum of Mathematics. Sigma},
    VOLUME = {11},
      YEAR = {2023},
     PAGES = {Paper No. e95, 20},
      ISSN = {2050-5094},
   MRCLASS = {05C35 (05C65)},
  MRNUMBER = {4657062},
       DOI = {10.1017/fms.2023.92},
       URL = {https://doi.org/10.1017/fms.2023.92},
}

@article {aharoni2017rainbow,
    AUTHOR = {Aharoni, Ron and Howard, David},
     TITLE = {A rainbow {$r$}-partite version of the {E}rd\H{o}s-{K}o-{R}ado
              theorem},
   JOURNAL = {Combin. Probab. Comput.},
  FJOURNAL = {Combinatorics, Probability and Computing},
    VOLUME = {26},
      YEAR = {2017},
    NUMBER = {3},
     PAGES = {321--337},
      ISSN = {0963-5483},
   MRCLASS = {05D05},
  MRNUMBER = {3628907},
MRREVIEWER = {Rui Gu},
       DOI = {10.1017/S0963548316000353},
       URL = {https://www.cambridge.org/core/journals/combinatorics-probability-and-computing/article/abs/rainbow-rpartite-version-of-the-erdoskorado-theorem/645E07E1715A0CA2E98E64D039E1451C},
}

@online {chakraborti2023bandwidth,
    AUTHOR = {Chakraborti, Debsoumya and Im, Seonghyuk and Kim, Jaehoon and Liu, Hong},
     TITLE = {A bandwidth theorem for graph transversals},
      YEAR = {2023},
    EPRINT = {2302.09637},
    archivePrefix = {arXiv},
    primaryClass = {math.CO},
}

@online {cheng2021transversal,
    author = {Cheng, Yangyang and Han, Jie and Wang, Bin and Wang, Guanghui and Yang, Donglei},
    title = {Transversal {H}amilton cycle in hypergraph systems},
    year = {2021},
    EPRINT = {2111.07079},
    archivePrefix = {arXiv},
    primaryClass = {math.CO},
}

@article {huang1994relations,
    AUTHOR = {Huang, Rosa and Rota, Gian-Carlo},
     TITLE = {On the relations of various conjectures on {L}atin squares and
              straightening coefficients},
   JOURNAL = {Discrete Math.},
  FJOURNAL = {Discrete Mathematics},
    VOLUME = {128},
      YEAR = {1994},
    NUMBER = {1-3},
     PAGES = {225--236},
       DOI = {10.1016/0012-365X(94)90114-7},
       URL = {https://www.sciencedirect.com/science/article/pii/0012365X94901147},
}

@online {pokrovskiy2020rota,
    author = {Pokrovskiy, Alexey},
    title = {Rota's {B}asis {C}onjecture holds asymptotically},
    year = {2020},
    EPRINT = {2008.06045},
    archivePrefix = {arXiv},
    primaryClass = {math.CO},
  }

@article {cheng2021pancyclicity,
    AUTHOR = {Cheng, Yangyang and Wang, Guanghui and Zhao, Yi},
     TITLE = {Rainbow pancyclicity in graph systems},
   JOURNAL = {Electron. J. Combin.},
  FJOURNAL = {Electronic Journal of Combinatorics},
    VOLUME = {28},
      YEAR = {2021},
    NUMBER = {3},
     PAGES = {Paper No. 3.24, 9},
       DOI = {10.37236/9033},
       URL = {https://www.combinatorics.org/ojs/index.php/eljc/article/view/v28i3p24},
}

@online {chakraborti2024hamilton,
    AUTHOR = {Chakraborti, Debsoumya and Kim, Jaehoon and Lee, Hyunwoo and Seo, Jaehyeon},
     TITLE = {Transversal cycles and paths in tournaments},
      NOTE = {In preparation},
}

@article {gupta2023general,
    AUTHOR = {Gupta, Pranshu and Hamann, Fabian and M\"{u}yesser, Alp and
              Parczyk, Olaf and Sgueglia, Amedeo},
     TITLE = {A general approach to transversal versions of {D}irac-type
              theorems},
   JOURNAL = {Bull. Lond. Math. Soc.},
  FJOURNAL = {Bulletin of the London Mathematical Society},
    VOLUME = {55},
      YEAR = {2023},
    NUMBER = {6},
     PAGES = {2817--2839},
      ISSN = {0024-6093,1469-2120},
   MRCLASS = {05D40 (05D15)},
  MRNUMBER = {4689555},
       DOI = {10.1112/blms.12896},
       URL = {https://doi.org/10.1112/blms.12896},
}

@article {ghouila-houri1960condition,
    AUTHOR = {Ghouila-Houri, Alain},
     TITLE = {Une condition suffisante d'existence d'un circuit hamiltonien},
   JOURNAL = {C. R. Acad. Sci. Paris},
  FJOURNAL = {Comptes Rendus Hebdomadaires des S\'{e}ances de l'Acad\'{e}mie
              des Sciences},
    VOLUME = {251},
      YEAR = {1960},
     PAGES = {495--497},
      ISSN = {0001-4036},
   MRCLASS = {05.00},
  MRNUMBER = {114771},
MRREVIEWER = {G.\ A.\ Dirac},
}

\end{document}